\newtheorem{theorem}{Theorem}[section]
\newtheorem{assumption}[theorem]{Assumption}
\newtheorem{corollary}[theorem]{Corollary}
\newtheorem{definition}[theorem]{Definition}
\newtheorem{lemma}[theorem]{Lemma}
\newtheorem{proposition}[theorem]{Proposition}
\newtheorem{remark}[theorem]{Remark}
\newenvironment{proof}[1][Proof]{\textbf{#1.} }{\ \rule{0.5em}{0.5em}}
\begin{document}

\title{Normal Approximations for Wavelet Coefficients on Spherical Poisson
Fields}
\author{Claudio Durastanti \\
{\small \textit{Department of Mathematics, University of Rome Tor Vergata}}
\and Domenico Marinucci\thanks{%
Corresponding author, email: marinucc@mat.uniroma2.it. Research supported by
the ERC Grant \emph{Pascal,} n.277742} \\
\textit{\small Department of Mathematics, University of Rome Tor Vergata}
\and Giovanni Peccati \\
\textit{{\small Unit\'{e} de Recherche en Math\'{e}matiques, Luxembourg
University } }}
\maketitle

\begin{abstract}
We compute explicit upper bounds on the distance between the law of a
multivariate Gaussian distribution and the joint law of wavelets/needlets
coefficients based on a homogeneous spherical Poisson field. In particular,
we develop some results from Peccati and Zheng (2011), based on Malliavin
calculus and Stein's methods, to assess the rate of convergence to
Gaussianity for a triangular array of needlet coefficients with growing
dimensions. Our results are motivated by astrophysical and cosmological
applications, in particular related to the search for point sources in
Cosmic Rays data.

\medskip

\noindent \textbf{Keywords and Phrases: }Berry-Esseen Bounds; Malliavin
Calculus; Multidimensional Normal Approximation; Poisson Process; Stein's
Method; Spherical Wavelets.

\medskip

\noindent \textbf{AMS Classification: }60F05; 42C40; 33C55; 60G60; 62E20.
\end{abstract}

\section{Introduction}

The aim of this paper is to establish multidimensional normal approximation
results for vectors of random variables having the form of wavelet
coefficients integrated with respect to a Poisson measure on the unit
sphere. The specificity of our analysis is that we require the dimension of
such vectors to grow to infinity. Our techniques are based on recently
obtained bounds for the normal approximation of functionals of general
Poisson measures (see \cite{PSTU, PecZheng}), as well as on the use of the
localization properties of wavelets systems on the sphere (see \cite{npw1},
as well as the recent monograph \cite{mp-book}). A large part of the paper
is devoted to the explicit determination of the above quoted bounds in terms
of dimension.

\subsection{Motivation and overview}

A classical problem in asymptotic statistics is the assessment of the speed
of convergence to Gaussianity (that is, the computation of explicit
Berry-Esseen bounds) for parametric and nonparametric estimation procedures
-- for recent references connected to the main topic of the present paper,
see for instance \cite{HuangZhang, LiWeiXing, YangWangLing}. In this area,
an important novel development is given by the derivation of effective
Berry-Esseen bounds by means of the combination of two probabilistic
techniques, namely the \textit{Malliavin calculus of variations} and the 
\textit{Stein's method} for probabilistic approximations. The monograph \cite%
{ChenGoldShao} is the standard modern reference for Stein's method, whereas 
\cite{np-book} provides an exhaustive discussion of the use of Malliavin
calculus for proving normal approximation results on a Gaussian space. The
fact that one can use Malliavin calculus to deduce normal approximation
bounds (in total variation) for functionals of Gaussian fields was first
exploited in \cite{np-ptrf} -- where one can find several quantitative
versions of the \textquotedblleft fourth moment theorem\textquotedblright\
for chaotic random variables proved in \cite{nualartpeccati2004}. Lower
bounds can also be computed, entailing that the rates of convergence
provided by these techniques are sharp in many instances -- see again \cite%
{np-book}. \newline

In a recent series of contributions, the interaction between Stein's method
and Malliavin calculus has been further exploited for dealing with the
normal approximation of functionals of a general Poisson random measure. The
most general abstract results appear in \cite{PSTU} (for one-dimensional
normal approximations) and \cite{PecZheng} (for normal approximations in
arbitrary dimensions). These findings have recently found a wide range of
applications in the field of stochastic geometry -- see \cite{LRP1, LRP2,
minh, LPST, lesmathias} for a sample of geometric applications, as well as
the webpage 
\begin{equation*}
\mbox{\texttt{http://www.iecn.u-nancy.fr/$\sim$nourdin/steinmalliavin.htm}}
\end{equation*}
for a constantly updated resource on the subject.\newline

The purpose of this paper is to apply and extend the main findings of \cite%
{PSTU, PecZheng} in order to study the multidimensional normal approximation
of the elements of the first Wiener chaos of a given Poisson measure. Our
main goal is to deduce bounds that are well-adapted to deal with
applications where the dimension of a given statistic increases with the
number of observations. This is a framework which arises naturally in many
relevant fields of modern statistical analysis; in particular, our principal
motivation originates from the implementation of \textit{wavelet systems on
the sphere}. In these circumstances, when more and more data become
available, a higher number of wavelet coefficients is evaluated, as it is
customarily the case when considering, for instance, thresholding
nonparametric estimators. We shall hence be concerned with sequences of
Poisson fields, whose intensity grows monotonically. We then exploit the
wavelets localization properties to establish bounds that grow linearly with
the number of functionals considered; we are then able to provide explicit
recipes, for instance, for the number of joint testing procedures that can
be simultaneously entertained ensuring that the Gaussian approximation may
still be shown to hold, in a suitable sense.\newline

\subsection{Main contributions}

Consider a sequence $\{X_{i}:i\geq 1\}$ with values in the unit sphere $%
\mathbb{S}^{2}$, and define $\{\psi _{jk}\}$ to be the collection of the 
\textit{spherical needlets} associated with a certain constant $B>1$, see
Section \ref{ss:needlets} below for more details and discussion. Write also $%
\sigma _{jk}^{2}=E[\psi _{jk}(X_{1})^{2}]$ and $b_{jk}=E[\psi _{jk}(X_{1})]$%
, and consider an independent (possibly inhomogeneous) Poisson process $%
\{N_{t}:t\geq 0\}$ on the real line such that $E[N_{t}]=R(t)\rightarrow
\infty $, as $t\rightarrow \infty $. Formally, our principal aim is to
establish conditions on the sequences $\{j(n):n\geq 1\}$, $\{R(n):n\geq 1\}$
and $\{d(n):n\geq 1\}$ ensuring that the distribution of the centered $d(n)$%
-dimensional vector 
\begin{eqnarray}
&&Y_{n}=(Y_{n,1},...,Y_{n,d(n)})  \label{e:yn} \\
&=&\frac{1}{\sqrt{R(n)}}\left( \sum_{i=1}^{N(n)}\frac{\psi
_{j(n)k_{1}}(X_{i})}{\sigma _{j(n)k_{1}}}-\frac{R(n)b_{j(n)k_{1}}}{\sigma
_{j(n)k_{1}}},...,\sum_{i=1}^{N(n)}\frac{\psi _{j(n)k_{d(n)}}(X_{i})}{\sigma
_{j(n)k_{1}}}-\frac{R(n)b_{j(n)k_{d(n)}}}{\sigma _{j(n)k_{d(n)}}}\right) 
\notag
\end{eqnarray}%
is asymptotically close, in the sense of some smooth distance denoted $d_{2}$
(see Definition \ref{d:2}), to the law of a $d(n)$-dimensional Gaussian
vector, say $Z_{n}$, with centered and independent components having unit
variance. The use of a smooth distance allows one to deduce minimal conditions for this kind of asymptotic Gaussianity. The crucial point is that we allow the dimension $d(n)$ to grow to
infinity, so that our results require to explicitly assess the dependence of
each bound on the dimension. We shall perform our tasks through the
following main steps: (i) Proposition \ref{p:uni} deals with one-dimensional
normal approximations, (ii) Proposition \ref{p:multi} deals with normal
approximations in a fixed dimension, and finally (iii) in Theorem \ref%
{t:growing} we deduce a bound that is well-adapted to the case $%
d(n)\rightarrow \infty $. More precisely, Theorem \ref{t:growing} contains
an upper bound linear in $d(n)$, that is, an estimate of the type 
\begin{equation}
d_{2}(Y_{n},Z_{n})\leq C(n)\times d(n)\text{ .}  \label{e:i}
\end{equation}%
It will be shown in Corollary \ref{c:berlusconiritorna}, that the sequence $%
C(n)$ can be chosen to be 
\begin{equation*}
O\left(1/\sqrt{R(n)B^{-2j(n)}}\right);
\end{equation*}
as discussed below in Remark \ref{importantremark}, $R(n)\times B^{-2j(n)}$
can be viewed as a measure of the ``effective sample size" for the
components of $Y_{n}.$

\subsection{About de-Poissonization}

Our results can be used in order to deduce the asymptotic normality of
de-Poissonized linear statistics with growing dimension. To illustrate this
point, assume that the random variables $X_i$ are uniformly distributed on
the sphere. Then, it is well known that $b_{jk} =0$, whenever $j>1$. In this
framework, when $j(n)>1$ for every $n$, $R(n) = n$ and $d(n)/n^{1/4} \to 0$,
the conditions implying that $Y_n$ is asymptotically close to Gaussian,
automatically ensure that the law of the \textit{de-Poissonized} vector 
\begin{eqnarray}  \label{e:ynprime}
&&Y^{\prime }_n =(Y^{\prime }_{n,1},...,Y^{\prime }_{n,d(n)}) = \frac{1}{%
\sqrt{n}}\left(\sum_{k=1}^{n} \frac{\psi_{j(n)k_1} (X_i)}{{\sigma}_{j(n)k_1}}%
,..., \sum_{k=1}^{n} \frac{\psi_{j(n)k_{d(n)}} (X_i)} {{\sigma}%
_{j(n)k_{d(n)}}}\right)
\end{eqnarray}
is also asymptotically close to Gaussian. The reason for this phenomenon is
nested in the statement of the forthcoming (elementary) Lemma \ref{l:intro}.

\begin{lemma}
\label{l:intro} Assume that $R(n)=n$, that the $X_i$'s are uniformly
distributed on the sphere, and that $j(n)>1$ for every $n$. Then, there
exists a universal constant $M$ such that, for every $n$ and every Lipschitz
function $\varphi : \mathbb{R}^{d(n)} \to \mathbb{R}$, the following
estimate holds: 
\begin{equation*}
\Big|E[\varphi(Y^{\prime }_n)] -E[\varphi(Y_n)]\Big| \leq M\|\varphi\|_{Lip} 
\frac{d(n)}{n^{1/4}}.
\end{equation*}
\end{lemma}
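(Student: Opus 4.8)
The plan is to compare the two vectors $Y_n$ and $Y'_n$ by controlling the discrepancy that arises from replacing the random Poisson count $N(n)$ with the deterministic value $n$. Since $\varphi$ is Lipschitz, I would first write
\begin{equation*}
\Big|E[\varphi(Y'_n)]-E[\varphi(Y_n)]\Big|\leq \|\varphi\|_{Lip}\, E\big[\|Y_n-Y'_n\|_{\mathbb{R}^{d(n)}}\big],
\end{equation*}
so the entire problem reduces to estimating the expected Euclidean norm of the difference $Y_n-Y'_n$. Here I am using that the $X_i$ can be realized on a common probability space, with the same sequence feeding both sums, so that the two vectors differ only in the number of summands.

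Next I would compute this difference componentwise. Under the hypotheses $R(n)=n$, uniform distribution of the $X_i$, and $j(n)>1$, we have $b_{j(n)k}=0$ for every $k$, so the centering terms $R(n)b_{j(n)k}/\sigma_{j(n)k}$ vanish. Hence the $m$-th component of $Y_n$ is $\frac{1}{\sqrt n}\sum_{i=1}^{N(n)}\psi_{j(n)k_m}(X_i)/\sigma_{j(n)k_m}$ while that of $Y'_n$ is $\frac{1}{\sqrt n}\sum_{i=1}^{n}\psi_{j(n)k_m}(X_i)/\sigma_{j(n)k_m}$. The difference is therefore $\frac{1}{\sqrt n}\sum_{i\in I}\pm\psi_{j(n)k_m}(X_i)/\sigma_{j(n)k_m}$, where $I$ is the (random) set of indices between $N(n)$ and $n$, of cardinality $|N(n)-n|$. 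I would then bound $E[\|Y_n-Y'_n\|]$ by $\sqrt{E[\|Y_n-Y'_n\|^2]}$ (Cauchy–Schwarz) and expand the squared norm as a sum over the $d(n)$ coordinates.

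The key computation is the second-moment estimate. Conditioning on $N(n)$ and using that the $X_i$ are i.i.d., the normalization by $\sigma_{j(n)k_m}$ makes each summand have unit variance and mean zero; summing over the $d(n)$ components produces a factor $d(n)$, and summing over the roughly $|N(n)-n|$ extra (or missing) indices produces a factor $E|N(n)-n|$. Thus I expect
\begin{equation*}
E\big[\|Y_n-Y'_n\|^2\big]\leq \frac{d(n)}{n}\,E\big[|N(n)-n|\big].
\end{equation*}
Because $N(n)$ is Poisson with mean $n$, we have $E|N(n)-n|\leq \sqrt{\mathrm{Var}(N(n))}=\sqrt n$, so the right-hand side is at most $d(n)/\sqrt n$. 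Taking the square root yields $E[\|Y_n-Y'_n\|]\leq \sqrt{d(n)}/n^{1/4}$, and combined with the Lipschitz bound this gives the claimed estimate with the correct power $n^{1/4}$ in the denominator. (One should keep track of whether the final bound carries $d(n)$ or $\sqrt{d(n)}$; the cleanest route giving the stated linear-in-$d(n)$ form is to bound the norm by the sum of absolute values of the components rather than going through the $\ell^2$ norm, which replaces the $\sqrt{d(n)}$ by $d(n)$.)

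The main obstacle I anticipate is handling the correlation between the random index set $I$ and the summands $\psi_{j(n)k_m}(X_i)$ cleanly, since $N(n)$ and the $X_i$ are independent by assumption but the range of summation is itself random. The right device is to condition on $N(n)$ first, so that inside the conditional expectation the number of summands is fixed and the i.i.d. structure of the $X_i$ applies directly; the Poisson fluctuation $E|N(n)-n|\le \sqrt n$ is then brought in at the end. A secondary point requiring care is verifying that $E[\psi_{j(n)k_m}(X_1)^2]=\sigma_{j(n)k_m}^2$ by definition, so the normalized summands genuinely have unit variance and the extra terms contribute exactly one unit of variance each — this is what makes the per-coordinate bound uniform in $k_m$ and allows the clean factorization into $d(n)$ times the Poisson fluctuation.
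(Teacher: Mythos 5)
Your proposal is correct and follows essentially the same route as the paper: reduce to the per-coordinate second moment $E[(Y'_{n,l}-Y_{n,l})^2]=n^{-1}E|N(n)-n|$ using that the normalized summands are i.i.d., mean zero (since $b_{j(n)k}=0$) and of unit variance, then control the Poisson fluctuation and invoke the Lipschitz property. The only (harmless) differences are that you bound $E|N(n)-n|\leq\sqrt{n}$ by Cauchy--Schwarz where the paper computes it exactly as $2e^{-n}n^{n+1}/n!$ and applies Stirling's formula, and that your $\ell^{2}$ aggregation over coordinates actually yields the slightly stronger factor $\sqrt{d(n)}$ before you relax it to the stated $d(n)$.
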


\begin{proof}
Fix $l=1,...,d(n)$, and write $\beta_l(x) = \frac{\psi_{j(n)k_l} (x)}{{\sigma%
}_{j(n)k_l}}$, in such a way that $E[\beta_l(X_1)^2] = 1$. One has that 
\begin{equation*}
E[( Y^{\prime }_{n,l} - Y_{n,l} )^2] = 2 (1-\alpha_n),
\end{equation*}
where 
\begin{equation*}
\alpha_n = \frac{1}{n} \sum_{m=0}^n \frac{e^{-n}n^{m}}{m!}(n\wedge m) = 1- 
\frac{e^{-n}n^{n}}{n!}.
\end{equation*}
This gives the estimate 
\begin{equation*}
E[| Y^{\prime }_{n,l} - Y_{n,l} |] \leq\sqrt{ E[( Y^{\prime }_{n,l} -
Y_{n,l} )^2]} \leq \sqrt{2\frac{e^{-n}n^n}{n!}},
\end{equation*}
so that the conclusion follows from an application of Stirling's formula and
of the Lipschitz property of $\varphi$.
\end{proof}

\begin{remark}
\begin{enumerate}
\item[(i)] \textrm{Lemma \ref{l:intro} implies that one can obtain an
inequality similar to (\ref{e:i}) for $Y^{\prime }_n$, that is: 
\begin{equation*}
d_2(Y^{\prime }_n, Z_n) \leq \left(C(n) +\frac{M}{n^{1/4}}\right) \times
d(n).
\end{equation*}
}

\item[(ii)] \textrm{With some extra work, one can obtain estimates similar
to those in Lemma \ref{l:intro} also when the constants $b_{j(n)k_l}$ are
possibly different from zero. This point, that requires some lenghty
technical considerations, falls slightly outside the scope of this paper and
will be pursued in full generality elsewhere. }

\item[(iii)] \textrm{In \cite{bentkus}, Bentkus proved the following (yet
unsurpassed) bound. Assume that $\{X_{i}:i\geq 1\}$ is a collection of
i.i.d. $d$-dimensional vectors, such that $X_{1}$ is centered and with
covariance equal to the identity matrix. Set $S_{n}=n^{-1/2}(X_{1}+\cdots
X_{n})$, $n\geq 1$ and let $Z$ be a $d$-dimensional centered Gaussian vector
with i.i.d. components having unit variance. Then, for every convex set $%
C\subset \mathbb{R}^{d}$ 
\begin{equation*}
\Big|E[\mathbf{1}_{C}(S_{n})]-E[\mathbf{1}_{C}(Z)]\Big|\leq d^{1/4}\frac{%
400\beta }{\sqrt{n}},
\end{equation*}%
where $\beta =E[\Vert X_{1}\Vert _{\mathbb{R}^{d}}^{3}]$. It is unclear
whether one can effectively use this bound in order to investigate the
asymptotic Gaussianity of sequences of random vectors of the type (\ref{e:yn}%
)--(\ref{e:ynprime}), in particular because, for a fixed $n$, the components
of $Y_{n},\,Y_{n}^{\prime }$ have in general a non trivial correlation. Note
also that a simple application of Jensen inequality shows that $\beta
d^{1/4}n^{-1/2}\geq d^{7/4}n^{-1/2}$. However, a direct comparison of Bentkus' estimates with our
``linear" rate in d (see (\ref{e:i}), as well as Theorem \ref{t:growing}
below) is unfeasible, due to the differences with our setting, namely concerning
the choice of distance, the structure of the considered covariance matrices, the Poissonized environment, and the role of }$B^{j(n)}$ 
\textrm{discussed in Remark \ref{importantremark}}\textrm{\ . }

\item[(iv)] \textrm{A careful inspection of the proofs of our main results
reveals that the findings of this paper have a much more general validity,
and in particular can be extended to kernel estimators on compact spaces
satisfying mild concentration and equispacing properties (see also \cite%
{Jupp1, Jupp2}). In this paper, however, we decided to stick to the
presentation on the sphere for definiteness, and to make the connection with
applications clearer. Some more general frameworks are discussed briefly at
the end of Section \ref{s:dimg1}. }

\item[(v)] For notational simplicity, throughout this paper we will stick to
the case where all the components in our vector statistics are evaluated at
the same scale $j(n)$ (see below for more precise definitions and detailed
discussion). The relaxation of this assumption to cover multiple scales $%
\left( j_{1}(n),...j_{d}(n)\right) $ does not require any new ideas and is
not considered here for brevity's sake.\textrm{\ }
\end{enumerate}
\end{remark}

\subsection{Plan}

The plan of the paper is as follows: in Section \ref{s:prm} we provide some
background material on Stein-Malliavin bounds in the case of Poisson random
fields, and we describe a suitable setting for the current paper, entailing
sequences of fields with monotonically increasing governing measures. We
provide also some new results, ensuring that the Central Limit Theorems we
are going to establish are stable, in the classical sense. In Section \ref%
{s:needletscoeff} we recall some background material on the construction of
tight wavelet systems on the sphere (see \cite{npw1, npw2} for the original
references, as well as \cite[Chapter 10]{mp-book}) and we explain how to
express the corresponding wavelet coefficients in terms of stochastic
integrals with respect to a Poisson random measure. We also illustrate
shortly some possible statistical applications. In Section \ref{s:dim1} we
provide our bounds in the one-dimensional case; these are simple results
which could have been established by many alternative techniques, but still
they provide some interesting insights into the ``effective area of
influence" of a single component of the wavelet system. The core of the
paper is in Section \ref{s:dimg1}, where the bound is provided in the
multidimensional case, allowing in particular for the number of coefficients
to be evaluated to grow with the number of observations. This result
requires a careful evaluation of the upper bound, which is made possible by
the localization properties in real space of the wavelet construction.

\section{Poisson Random Measures and Stein-Malliavin \newline
Bounds}

\label{s:prm}

In order to study the asymptotic behaviour of linear functionals of Poisson
measures on the sphere $\mathbb{S}^{2}$, we start by recalling the
definition of a Poisson random measure -- for more details, see for instance 
\cite{PeTa, privaultbook, SW}. We work on a probability space $(\Omega, 
\mathcal{F}, P)$.

\begin{definition}
\label{d:poisrm} Let $\left( \Theta ,\mathcal{A},\mu \right) $ be a $\sigma $%
-finite measure space, and assume that $\mu$ has no atoms (that is, $%
\mu(\{x\}) = 0$, for every $x\in \Theta$). A collection of random variables $%
\left\{ N\left( A\right) :A\in \mathcal{A}\right\} ,$ taking values in $%
\mathbb{Z}_{+}\cup \left\{ +\infty \right\} $ , is called a \textbf{Poisson
random measure (PRM)} on $\Theta $ with \textbf{intensity measure} (or 
\textbf{control measure}) $\mu $ if the following two properties hold:

\begin{enumerate}
\item For every $A$ $\in \mathcal{A}$, $N\left( A\right) $ has Poisson
distribution with mean $\mu \left( A\right) $;

\item If $A_{1},\ldots A_{n}\in \mathcal{A}$ are pairwise disjoint, then $%
N\left( A_{1}\right) ,\ldots ,N\left( A_{n}\right) $ are independent.
\end{enumerate}
\end{definition}

\begin{remark}
\begin{itemize}
\item[(i)] \textrm{\textrm{In Definition \ref{d:poisrm}, a Poisson random
variable with parameter $\lambda = \infty$ is implicitly set to be equal to $%
\infty$. } }

\item[(ii)] \textrm{\textrm{Points 1 and 2 in Definition \ref{d:poisrm}
imply that, for every $\omega \in \Omega $, the mapping $A \mapsto N\left(
A,\omega \right)$ is a measure on $\Theta $. Moreover, since $\mu $ is non
atomic, one has that 
\begin{equation}  \label{e:supp}
P\big[N(\{x\}) = 0 \mbox{ or } 1,\,\, \forall x \in \Theta \big] =1.
\end{equation}
} }
\end{itemize}
\end{remark}

\begin{assumption}
\label{ass:general}\textrm{Our framework for the rest of the paper will be
the following special case of Definition \ref{d:poisrm}: }

\begin{enumerate}
\item[\textrm{(a)}] \textrm{We take $\Theta = \mathbb{R}_+\times \mathbb{S}%
^2 $, with $\mathcal{A} = \mathcal{B}(\Theta)$, the class of Borel subsets
of $\Theta$. }

\item[\textrm{(b)}] \textrm{The symbol $N$ indicates a Poisson random
measure on $\Theta $, with homogeneous intensity given by $\mu =\rho \times
\nu $, where $\rho $ is some measure on $\mathbb{R}_{+}$ and $\nu $ is a
probability on $\mathbb{S}^{2}$ of the form $\nu (dx)=f(x)dx$, where $f$ is
a density on the sphere. We shall assume that $\rho (\{0\})=0$ and that the
mapping $\rho \mapsto \rho ([0,t])$ is strictly increasing and diverging to
infinity as $t\rightarrow \infty $. We also adopt the notation 
\begin{equation}
R_{t}:=\rho ([0,t]),\quad t\geq 0,  \label{e:erre}
\end{equation}%
that is, $t\mapsto R_{t}$ is the distribution function of $\rho $. }
\end{enumerate}
\end{assumption}

\begin{remark}
\textrm{\label{r:ent} }

\begin{enumerate}
\item[(i)] \textrm{\textrm{For a fixed $t>0$, the mapping 
\begin{equation}
A\mapsto N_{t}(A):=N([0,t]\times A)  \label{e:nt}
\end{equation}%
defines a Poisson random measure on $\mathbb{S}^{2}$, with non-atomic
intensity 
\begin{equation}
\mu _{t}(dx)=R_{t}\cdot \nu (dx)=R_{t}\cdot f(x)dx.  \label{e:mut}
\end{equation}%
Throughout this paper, we shall assume }$f(x)$ to be bounded and bounded
away from zero, e.g.%
\begin{equation}
\zeta _{1}\leq f(x)\leq \zeta _{2}\text{ , some }\zeta _{1},\zeta _{2}>0%
\text{ , for all }x\in \mathbb{S}^{2}\text{ .}  \label{AVA}
\end{equation}
}

\item[(ii)] \textrm{\textrm{Let $\{X_{i}=i\geq 1\}$ be a sequence of i.i.d.
random variables with values in $\mathbb{S}^{2}$ and common distribution
equal to $\nu $. Then, for a fixed $t>0$, the random measure $A\mapsto
N_{t}(A)=N([0,t]\times A)$ has the same distribution as $A\mapsto
\sum_{i=1}^{N}\delta _{X_{i}}(A)$, were $\delta _{x}$ indicates a Dirac mass
at $x$, and $N$ is an independent Poisson random variable with parameter $%
R_{t}$. This holds because: (a) since $\nu $ is a probability measure, the
support of the random measure $N_{t}$ (written $supp(N_{t})$) is almost
surely a finite set, and (b) conditionally on the event $\{N_{t}(\mathbb{S}%
^{2})=n\}$ (which is the same as the event $\{\,|supp(N_{t})|=n\}$ -- recall
(\ref{e:supp})), the points in the support of $N_{t}$ are distributed as $n$
i.i.d. random variables with common distribution $\nu $. } }

\item[(iii)] \textrm{\textrm{By definition, for every $t_1 <t_2$ one has
that a random variable of the type $N_{t_2}(A)- N_{t_1}(A)$, $A\subset 
\mathbb{S}^2$, is independent of the random measure $N_{t_1}$, as defined in
(\ref{e:nt}). } }

\item[(iv)] \textrm{\textrm{To simplify the discussion, one can assume that $%
\rho(ds) = R\cdot \ell(ds)$, where $\ell$ is the Lebesgue measure and $R>0$,
in such a way that $R_t = R\cdot t$. } }
\end{enumerate}
\end{remark}

We will now introduce two distances between laws of random variables taking
values in $\mathbb{R}^{d}$. Both distances define topologies, over the class
of probability distributions on $\mathbb{R}^{d}$, that are strictly stronger
than convergence in law. One should observe that, in this paper, the first
one (Wasserstein distance) will be only used for random elements with values
in $\mathbb{R}$. Given a function $g\in \mathcal{C}^{1}(\mathbb{R}^{d})$, we
write $\Vert g\Vert _{Lip}=\sup\limits_{x\in \mathbb{R}^{d}}\Vert \nabla
g(x)\Vert _{\mathbb{R}^{d}}$. If $g\in \mathcal{C}^{2}(\mathbb{R}^{d})$, we
set 
\begin{equation*}
M_{2}(g)=\sup_{x\in \mathbb{R}^{d}}\Vert \mathrm{Hess}\,g(x)\Vert _{op},
\end{equation*}%
where $\Vert \cdot \Vert _{op}$ indicates the operator norm.

\begin{definition}
\label{d:wass} The \textbf{Wasserstein distance} $d_{W}$, between the laws
of two random vectors $X,Y$ with values in $\mathbb{R}^d$ ($d\geq 1$) and
such that $E\left\| X\right\| _{\mathbb{R}^{d}},E\left\| Y\right\| _{\mathbb{%
R}^{d}}<\infty $, is given by:%
\begin{equation*}
d_{W}\left( X,Y\right) =\sup_{g: \|g\|_{Lip} \leq 1}\left| E\left[ g\left(
X\right) \right] -E\left[ g\left( Y\right) \right] \right| \text{,}
\end{equation*}
\end{definition}

\begin{definition}
\label{d:2} The \textbf{distance} $d_{2}$ between the laws of two random
vectors $X,Y$ with values in $\mathbb{R}^{d}$ ($d\geq 1$), such that $%
E\left\| X\right\|_{\mathbb{R}^{d}},E\left\| Y\right\| _{\mathbb{R}%
^{d}}<\infty $, is given by:%
\begin{equation*}
d_{2}\left( X,Y\right) =\sup_{g\in \mathcal{H}}\left| E\left[ g\left(
X\right) \right] -E\left[ g\left( Y\right) \right] \right| \text{,}
\end{equation*}%
where $\mathcal{H}$ denotes the collection of all functions $g\in \mathcal{C}%
^{2}\left( \mathbb{R}^{d}\right) $ such that $\|g\|_{Lip}\leq 1$ and $%
M_2(g)\leq 1$.
\end{definition}

We now present, in a form adapted to our goals, two upper bounds involving
random variables living in the so-called \textit{first Wiener chaos} of $N$.
The first bound was proved in \cite{PSTU}, and concerns normal
approximations in dimension 1 with respect to the Wasserstein distance. The
second bound appears in \cite{PecZheng}, and provides estimates for
multidimensional normal approximations with respect to the distance $d_2$.
Both bounds are obtained by means of a combination of the Malliavin calculus
of variations and the Stein's method for probabilistic approximations.

\begin{remark}

\begin{itemize}
\item[(i)] \textrm{\textrm{Let $f\in L^{2}(\Theta ,\mu )\cap L^{1}(\Theta
,\mu )$. In what follows, we shall use the symbols $N(f)$ and $\hat{N}(f)$,
respectively, to denote the Wiener-It\^{o} integrals of $f$ with respect to $%
N$ and with respect to the \textit{compensated Poisson measure} 
\begin{equation}
\hat{N}(A)=N(A)-\mu (A),\quad A\in \mathcal{B}(\Theta ),  \label{e:nhat}
\end{equation}%
where one uses the convention $N(A)-\mu (A)=\infty $ whenever $\mu
(A)=\infty $ (recall that $\mu $ is $\sigma $-finite). Note that, for ${N}%
(f) $ to be well-defined, one needs that $f\in L^{1}(\Theta ,\mu )$, whereas
for $\hat{N}(f)$ to be well-defined one needs that $f\in L^{2}(\Theta ,\mu )$%
. We will also make use of the following isometric property: for every $%
f,g\in $\textrm{$L^{2}(\Theta ,\mu )$}, 
\begin{equation}
E[\hat{N}(f)\hat{N}(g)]=\int_{\Theta }f(x)g(x)\mu (dx).  \label{e:isometry}
\end{equation}%
The reader is referred e.g. to \cite[Chapter 5]{PeTa} for an introduction to
Wiener-It\^{o} integrals. } }

\item[(ii)] \textrm{\textrm{For most of this paper, we shall consider
Wiener-It\^{o} integrals of functions $f$ having the form $f=[0,t]\times h$,
where $t>0$ and $h\in L^{2}(\mathbb{S}^{2},\nu )\cap L^{1}(\mathbb{S}%
^{2},\nu )$. For a function $f$ of this type one simply writes 
\begin{equation}
N(f)=N([0,t]\times h):=N_{t}(h),\quad \mbox{and}\quad \hat{N}(f)=\hat{N}%
([0,t]\times h):=\hat{N}_{t}(h).  \label{e:mwiint}
\end{equation}%
Observe that this notation is consistent with the one introduced in (\ref%
{e:nt}). Indeed, it is easily seen that $N_{t}(h)$ (resp. $\hat{N}_{t}(h)$)
coincide with the Wiener-It\^{o} integral of $h$ with respect to $N_{t}$
(resp. with respect to the compensated measure $\hat{N}_{t}=N_{t}-\mu
_{t}=N_{t}-R_{t}\cdot \nu $). } }

\item[(iii)] \textrm{\textrm{In view of Remark \ref{r:ent}-(ii), one also
has that, for $h\in L^{2}(\mathbb{S}^{2},\nu )\cap L^{1}(\mathbb{S}^{2},\nu
) $, 
\begin{equation}
N_{t}(h)=\!\!\!\sum_{x\in supp(N_{t})}h(x),\quad \mbox{and}\,\,\hat{N}%
_{t}(h)=\!\!\!\sum_{x\in supp(N_{t})}h(x)-\int_{\mathbb{S}^{2}}h(x)\mu
_{t}(dx),  \label{e:exmwii}
\end{equation}%
with $\mu _{t}$ defined as in (\ref{e:mut}). } }
\end{itemize}
\end{remark}

\begin{theorem}
\label{t:pstupz} Let the notation and assumptions of this section prevail.

\begin{enumerate}
\item Let $h \in L^2(\mathbb{S}^2, \nu):= L^2(\nu) $, let $Z\sim\mathscr{N}%
(0,1)$ and fix $t>0$. Then, the following bound holds (remember the
definition (\ref{e:mut})): 
\begin{equation}  \label{1stChUB}
d_W(\hat{N}_t(h), Z)\leq \left |1 - \|h\|^2_{L^2(\mathbb{S}^2,\mu_t)} \right
| + \int_{\mathbb{S}^2} |h(z)|^3 \mu_t(dz).
\end{equation}
As a consequence, if $\{h_t\} \subset L^2(\nu) \cap L^3(\nu)$ is a
collection of kernels verifying, as $t\rightarrow \infty$, 
\begin{equation}  \label{1CONDclt}
\|h_t\|_{L^2(\mathbb{S}^2,\mu_t)}\rightarrow 1 \text{\ \ and \ \ }
\|h_t\|_{L^3(\mathbb{S}^2,\mu_t)}\rightarrow 0,
\end{equation}
one has the CLT 
\begin{equation}  \label{1clt}
\hat{N}(h_t) \overset{\mathrm{Law}}{\longrightarrow} Z,
\end{equation}
and the inequality (\ref{1stChUB}) provides an explicit upper bound in the
Wasserstein distance.

\item For a fixed integer $d\geq 1$, let $Y\sim \mathscr{N}_{d}\left(
0,C\right) $, with $C$ positive definite and let 
\begin{equation*}
F_{t}=\left( F_{t,1},\ldots ,F_{t,d}\right) =\left( \hat{N}_{t}\left(
h_{t,1}\right) ,\ldots \hat{N}_{t}\left( h_{t,d}\right) \right)
\end{equation*}%
be a collection of $d$-dimensional random vectors such that $h_{t,a}\in
L^{2}(\nu )$. If we call $\Gamma _{t}$ the covariance matrix of $F_{t}$,
that is, 
\begin{equation*}
\Gamma _{t}\left( a,b\right) =E\left[ \hat{N}_{t}\left( h_{t,a}\right) \hat{N%
}_{t}\left( h_{t,b}\right) \right] =\left\langle
h_{t,a},h_{t,b}\right\rangle _{L^{2}\left( \mathbb{S}^{2},\mu _{t}\right) }%
\text{,}\,\,\,a,b=1,...,d,
\end{equation*}%
then:%
\begin{eqnarray}
d_{2}\left( F_{t},Y\right) &\leq &\left\Vert C^{-1}\right\Vert
_{op}\left\Vert C\right\Vert _{op}^{\frac{1}{2}}\left\Vert C-\Gamma
_{t}\right\Vert _{H.S.}  \label{e:yepes} \\
&&+\frac{\sqrt{2\pi }}{8}\left\Vert C^{-1}\right\Vert _{op}^{\frac{3}{2}%
}\left\Vert C\right\Vert _{op}\sum_{i,j,k=1}^{d}\int_{\mathbb{S}%
^{2}}\left\vert h_{t,i}\left( x\right) \right\vert \left\vert h_{t,j}\left(
x\right) \right\vert \left\vert h_{t,k}\left( x\right) \right\vert \mu
_{t}\left( dx\right) ,  \notag \\
&\leq &\left\Vert C^{-1}\right\Vert _{op}\left\Vert C\right\Vert _{op}^{%
\frac{1}{2}}\left\Vert C-\Gamma _{t}\right\Vert _{H.S.}  \label{d2} \\
&&+\frac{d^{2}\sqrt{2\pi }}{8}\left\Vert C^{-1}\right\Vert _{op}^{\frac{3}{2}%
}\left\Vert C\right\Vert _{op}\sum_{i=1}^{d}\int_{\mathbb{S}^{2}}\left\vert
h_{t,i}\left( x\right) \right\vert ^{3}\mu _{t}\left( dx\right) ,  \notag
\end{eqnarray}%
where $\Vert \cdot \Vert _{op}$ and $\Vert \cdot \Vert _{H.S.}$ stand,
respectively, for the operator and Hilbert-Schmidt norms. In particular, if $%
\Gamma _{t}\left( a,b\right) \longrightarrow C\left( a,b\right) $ and $\int_{%
\mathbb{S}^{2}}\left\vert h_{t,a}\left( x\right) \right\vert ^{3}\mu
_{t}\left( dx\right) \longrightarrow 0$ as $t\longrightarrow \infty $, for $%
a,b=1,\ldots d$, then $d_{2}\left( F_{t},Y\right) \longrightarrow 0$ and $%
F_{t}$ converges in distribution to $Y$.
\end{enumerate}
\end{theorem}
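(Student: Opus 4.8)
The plan is to obtain both estimates as direct specializations of the general Stein--Malliavin bounds for functionals of a Poisson random measure proved in \cite{PSTU} (dimension one, Wasserstein distance) and \cite{PecZheng} (arbitrary dimension, distance $d_2$). The decisive structural fact is that each integral $\hat{N}_t(h)$ belongs to the first Wiener chaos of $N$, where the Malliavin-type operators entering those bounds act in an elementary, deterministic fashion. Writing $D_z$ for the add-one-cost (difference) operator $D_zF=F(\omega+\delta_z)-F(\omega)$ and $L^{-1}$ for the pseudo-inverse of the Ornstein--Uhlenbeck generator, one checks immediately that, for $F=\hat{N}_t(h)$, $D_zF=h(z)$ is a \emph{deterministic} kernel and $L^{-1}F=-F$, whence $-D_zL^{-1}F=h(z)$ as well. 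It is precisely this deterministic character that collapses the random expectations occurring in the abstract inequalities into the fixed integrals displayed in the statement.

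For Part 1, I would recall the one-dimensional bound of \cite{PSTU}, which controls $d_W(F,Z)$ (for centered $F$ in the domain of $D$) by $E\big|1-\langle DF,-DL^{-1}F\rangle_{L^2(\mu_t)}\big|$ plus $\int_{\mathbb{S}^2}E\big[|D_zF|^{2}\,|D_zL^{-1}F|\big]\,\mu_t(dz)$. Substituting $D_zF=-D_zL^{-1}F=h(z)$, the inner product equals $\int_{\mathbb{S}^2}h^2\,d\mu_t=\|h\|^2_{L^2(\mathbb{S}^2,\mu_t)}$, a constant, so the first summand is exactly $\big|1-\|h\|^2_{L^2(\mathbb{S}^2,\mu_t)}\big|$, while the second reduces to $\int_{\mathbb{S}^2}|h(z)|^3\,\mu_t(dz)$. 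This gives (\ref{1stChUB}). The CLT (\ref{1clt}) then follows at once: under (\ref{1CONDclt}) both summands vanish as $t\to\infty$, and $d_W$ metrizes a topology stronger than convergence in law.

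For Part 2, I would invoke the multidimensional $d_2$-bound of \cite{PecZheng} for the vector $F_t=(\hat{N}_t(h_{t,1}),\dots,\hat{N}_t(h_{t,d}))$ with target $Y\sim\mathscr{N}_d(0,C)$. Its covariance term is $\|C^{-1}\|_{op}\|C\|_{op}^{1/2}$ times the Hilbert--Schmidt norm of the matrix with entries $C(a,b)-\langle DF_{t,a},-DL^{-1}F_{t,b}\rangle$; the first-chaos computation turns this inner product into the deterministic quantity $\Gamma_t(a,b)$, so the term becomes $\|C^{-1}\|_{op}\|C\|_{op}^{1/2}\|C-\Gamma_t\|_{H.S.}$. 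The remaining third-order term of \cite{PecZheng} is $\tfrac{\sqrt{2\pi}}{8}\|C^{-1}\|_{op}^{3/2}\|C\|_{op}$ times an integral of $\big(\sum_i|D_zF_{t,i}|\big)^2\big(\sum_k|D_zL^{-1}F_{t,k}|\big)$; substituting the kernels $|h_{t,i}(z)|$ converts it into $\sum_{i,j,k=1}^d\int_{\mathbb{S}^2}|h_{t,i}||h_{t,j}||h_{t,k}|\,d\mu_t$, i.e. (\ref{e:yepes}). The ``in particular'' CLT assertion follows because, for fixed $d$, the hypotheses force $\|C-\Gamma_t\|_{H.S.}\to0$ and each summand of the triple integral to $0$.

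The passage from (\ref{e:yepes}) to (\ref{d2}) is the sole elementary maneuver: applying the power-mean inequality $\big(\sum_{i=1}^d a_i\big)^3\leq d^2\sum_{i=1}^d a_i^3$ (valid for $a_i\geq0$, a case of Jensen's inequality for $x\mapsto x^3$) pointwise to $a_i=|h_{t,i}(x)|$ and integrating against $\mu_t$ bounds the triple sum by $d^2\sum_{i=1}^d\int_{\mathbb{S}^2}|h_{t,i}|^3\,d\mu_t$. I do not anticipate a genuine obstacle: given the abstract bounds of \cite{PSTU,PecZheng}, the argument is a substitution of the first-chaos Malliavin data together with one convexity estimate. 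The only points demanding care are the bookkeeping of the operator-norm constants and the verification that the deterministic kernels indeed annihilate every fluctuating contribution, so that the general inequalities specialize to the clean deterministic integrals claimed.
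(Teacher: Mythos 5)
Your proposal is correct and follows essentially the same route as the paper, which states the theorem by invoking the abstract bounds of \cite{PSTU} and \cite{PecZheng} and, in the remark following the statement, performs exactly your substitution $D_zF_{t,i}=-D_zL^{-1}F_{t,i}=h_{t,i}(z)$ to collapse the Malliavin expressions into the deterministic integrals, the passage from (\ref{e:yepes}) to (\ref{d2}) being the same pointwise power-mean estimate $\left(\sum_{i=1}^d a_i\right)^3\leq d^2\sum_{i=1}^d a_i^3$.
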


\begin{remark}
\textrm{\textrm{The estimate (\ref{e:yepes}) will be used to deduce one of
the main multidimensional bounds in the present paper. It is a direct
consequence of Theorem 3.3 in \cite{PecZheng}, where the following relation
is proved: for every vector $(F_1,...,F_d)$ of sufficiently regular centered
functionals of $\hat{N}_t$, 
\begin{equation*}
d_{2}(F,X)\leq \left\| C^{-1}\right\| _{op}\left\| C\right\| _{op}^{1/2}%
\sqrt{\sum_{i,j}^{d}\mathbb{E}\left[ C(i,j)-\left\langle
DF_{i},-DL^{-1}F_{j}\right\rangle _{L^{2}(\mu_t )}\right] ^{2}}
\end{equation*}%
\begin{equation*}
+\frac{\sqrt{2\pi }}{8}\left\| C^{-1}\right\| _{op}^{3/2}\left\| C\right\|
_{op}\int_{\mathbb{S}^2}\mu_t (dz)\mathbb{E}\left[ \left(
\sum_{i=1}^{d}\left| D_{z}F_{i}\right| \right) ^{2}\left(
\sum_{j=1}^{d}\left| D_{z}L^{-1}F_{j}\right| \right) \right],
\end{equation*}%
where 
\begin{equation*}
D_{z}F(\omega)=F_{z}(\omega)-F(\omega)\text{ , }a.e.-\mu (dz)P(d\omega) 
\text{ ,}
\end{equation*}%
and 
\begin{equation*}
F_{z}( N )=F_{z}(N +\delta _{z}),
\end{equation*}
that is, the random variable $F_z$ is obtained by adding to the argument of $%
F$ (which is a function of the point measure $N$), a Dirac mass at $z$, and $%
L^{-1}$ is the so-called \textit{pseudo-inverse of the Ornstein-Uhlenbeck
operator}. The estimate (\ref{e:yepes}) is then obtained by observing that,
when $F_i = F_{t,i} = \hat{N}_t(h_{t,i})$, then $D_zF_i = -D_zL^{-1}F =
h_{t,i}(z)$, in such a way that 
\begin{equation*}
\sqrt{\sum_{i,j}^{d}\mathbb{E}\left[ C(i,j)-\left\langle
DF_{i},-DL^{-1}F_{j}\right\rangle _{L^{2}(\mu )}\right] ^{2}} = \left\|
C-K_{t}\right\| _{H.S.},
\end{equation*}
and 
\begin{eqnarray*}
&& \int_{\mathbb{S}^2}\mu_t (dz)\mathbb{E}\left[ \left( \sum_{i=1}^{d}\left|
D_{z}F_{i}\right| \right) ^{2}\left( \sum_{j=1}^{d}\left|
D_{z}L^{-1}F_{j}\right| \right) \right] \\
&& \quad\quad\quad \quad\quad= \sum_{i,j,k=1}^{d}\int_{\mathbb{S}^{2}}\left|
h_{t,i}\left( x\right) \right| \left| h_{t,j}\left( x\right) \right|\left|
h_{t,k}\left( x\right) \right| \mu_t \left( dx\right).
\end{eqnarray*}
}}
\end{remark}

The next statement deals with the interesting fact that the convergence in
law implied by Theorem \ref{t:pstupz} is indeed \textit{stable}, as defined
e.g. in the classic reference \cite[Chapter 4]{JacSh}.

\begin{proposition}
\label{p:stable} The central limit theorem described at the end of Point 2
of Theorem \ref{t:pstupz} (and a fortiori the CLT at Point 1 of the same
theorem) is \textbf{stable} with respect to $\sigma(N)$ (the $\sigma$-field
generated by $N$) in the following sense: for every random variable $X$ that
is $\sigma(N)$-measurable, one has that 
\begin{equation*}
(X,F_t) \overset{\mathrm{Law}}{\longrightarrow} (X, Y),
\end{equation*}
where $Y\sim \mathscr{N}_d(0,C)$ is independent of $N$.
\end{proposition}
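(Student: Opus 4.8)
The plan is to reduce stable convergence to ordinary convergence in law for an enlarged family of random vectors, using a standard density/approximation argument. By the very definition of stable convergence (see \cite[Chapter 4]{JacSh}), it suffices to prove that for every bounded $\sigma(N)$-measurable random variable $X$ one has the joint convergence $(X,F_t)\overset{\mathrm{Law}}{\longrightarrow}(X,Y)$, where $Y\sim\mathscr{N}_d(0,C)$ is independent of $N$. Since any $\sigma(N)$-measurable $X$ can be approximated in $L^1(P)$ by bounded functionals depending on $N$ restricted to finitely many bounded Borel sets, a routine truncation reduces matters to the case where $X=\hat N_s(g)$ (or a smooth bounded function thereof) for some fixed $s>0$ and some $g\in L^2(\nu)\cap L^3(\nu)$. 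The key structural fact I would exploit is Remark \ref{r:ent}-(iii): for the wavelet statistics of interest the relevant kernels $h_{t,a}$ are supported, in the time variable, on an interval $[0,t]$ that eventually contains $[0,s]$, so that one can split the integration domain and isolate an ``old'' part (measurable with respect to $N_s$) from a ``new'' part whose increments are independent of $N_s$.

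First I would set up the enlarged vector $(X,F_t)=(\hat N_s(g),\hat N_t(h_{t,1}),\dots,\hat N_t(h_{t,d}))$ and observe that it again lives in the first Wiener chaos of $N$, hence Theorem \ref{t:pstupz}, Point 2, applies verbatim to this $(d+1)$-dimensional vector. The plan is therefore to check that the hypotheses of that theorem continue to hold for the augmented family. Concretely, I would verify two things: (a) the covariance matrix of $(X,F_t)$ converges, as $t\to\infty$, to the block-diagonal matrix $\mathrm{diag}(\mathrm{Var}(X),C)$; and (b) the relevant third-moment integrals of the enlarged kernel family still vanish in the limit. For (a) the diagonal blocks are immediate (the variance of $X$ is constant in $t$, and the lower block converges to $C$ by assumption), so the real content is the off-diagonal entries $E[\hat N_s(g)\,\hat N_t(h_{t,a})]=\langle g\mathbf 1_{[0,s]},\,h_{t,a}\rangle_{L^2(\mu)}$; using the isometry (\ref{e:isometry}) and Cauchy--Schwarz, these are bounded by $\|g\|_{L^2(\mu_s)}\|h_{t,a}\|_{L^2(\mu_s)}$, and the localization properties of the needlet kernels force $\|h_{t,a}\|_{L^2(\mu_s)}\to 0$ as $t\to\infty$, giving asymptotic orthogonality and hence the independence of $Y$ from $N$ in the limit. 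For (b) the third-moment terms for the augmented system are a finite sum of integrals of the form $\int|g|^{a}|h_{t,i}|^{b}|h_{t,j}|^{c}\,d\mu_t$ with $a+b+c=3$; each such term is controlled, via H\"older, by products of the individual $L^3(\mu_t)$-norms, which tend to zero by the standing hypothesis $\int_{\mathbb S^2}|h_{t,a}|^3\,\mu_t(dx)\to 0$ together with the finiteness of $\|g\|_{L^3(\mu_s)}$.

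Having established (a) and (b), Theorem \ref{t:pstupz} yields $(X,F_t)\overset{\mathrm{Law}}{\longrightarrow}(X,Y)$ with $Y\sim\mathscr{N}_d(0,C)$ independent of $X$; passing from the special bounded functionals back to a general $\sigma(N)$-measurable $X$ by the approximation argument above then completes the proof, the characteristic-function or bounded-Lipschitz criterion making the limiting exchange of $X$ routine. The main obstacle I anticipate is the off-diagonal covariance computation in step (a): one must argue that the fixed test functional $X$ becomes asymptotically uncorrelated with each growing-scale coefficient, and this is precisely where the spatial/spectral localization of the needlet system, rather than soft general nonsense, does the work—$h_{t,a}$ concentrates at a scale whose overlap with the fixed kernel $g$ on the finite time-window $[0,s]$ washes out. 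Everything else is a matter of verifying integrability and invoking the already-proved Theorem \ref{t:pstupz}.
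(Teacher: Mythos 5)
There is a genuine gap at the heart of your plan. Your central step is to apply Theorem \ref{t:pstupz}, Point 2, to the augmented $(d+1)$-dimensional vector $(X,F_t)=(\hat N_s(g),\hat N_t(h_{t,1}),\dots,\hat N_t(h_{t,d}))$. That theorem compares the law of a vector of Poisson integrals with a \emph{Gaussian} target $\mathscr{N}_{d+1}(0,C)$, so if its bound went to zero you would be proving that $X$ itself is asymptotically Gaussian --- which is false in general: $X=\hat N_s(g)$ is a \emph{fixed} random variable with a non-Gaussian (compensated Poisson) law, and the whole point of stable convergence is that the limit $(X,Y)$ retains this law in the first coordinate. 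Concretely, the obstruction shows up exactly where you claim point (b): the third-moment sum for the augmented system contains the diagonal term $\int_{\Theta}|g|^3\,d\mu$, which is a fixed strictly positive constant and does not vanish as $t\to\infty$. So the bound of Theorem \ref{t:pstupz} applied to $(X,F_t)$ does not tend to zero, and no choice of H\"older splitting fixes this. Your opening idea --- splitting $F_t$ into a part measurable with respect to $N_s$ and an increment independent of $N_s$ --- is a legitimate alternative route to stability, but you abandon it in favour of the flawed application of the multivariate bound, so as written the proof does not go through.

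The paper's proof avoids this entirely by working with the joint characteristic function: it uses the explicit exponential formula $E[\exp(i\lambda\hat N(f)+i\gamma\hat N(h_n))]=\exp\bigl[\int_\Theta(e^{i\lambda f+i\gamma h_n}-1-i(\lambda f+\gamma h_n))\,d\mu\bigr]$ and shows that the logarithm converges to $\int_\Theta(e^{i\lambda f}-1-i\lambda f)\,d\mu-\gamma^2/2$, i.e.\ the limit factorizes into the original (non-Gaussian) law of $\hat N(f)$ times an independent Gaussian. Two further points of comparison. First, the asymptotic decorrelation $\langle h_n,f\rangle_{L^2(\mu)}\to 0$, which you propose to obtain from needlet localization, is in the paper a consequence of the standing hypotheses alone ($\int|h_n|^3\,d\mu\to 0$ together with boundedness of $\|h_n\|_{L^2(\mu)}$, via a truncation of $f$); this matters because Proposition \ref{p:stable} is stated at the level of generality of Theorem \ref{t:pstupz}, for arbitrary kernels, not only for needlet coefficients. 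Second, the paper reduces to $d=1$ and to $X=\hat N(f)$ by a density argument much as you suggest; that part of your plan is fine.
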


\begin{proof}
We just deal with the case $d=1$, the extension to a general $d$ following
from elementary considerations. A density argument shows that it is enough
to prove the following claim: if $\hat{N}(h_{n})$ ($h_{n}\in L^{2}(\mu )$, $%
n\geq 1$) is a sequence of random variables verifying $E[\hat{N}%
(h_{n})^{2}]=\Vert h_{n}\Vert _{L^{2}(\mu )}^{2}\rightarrow 1$ and $%
\int_{\Theta }|h_{n}|^{3}d\mu \rightarrow 0$, then for every fixed $f\in
L^{2}(\mu )$, the pair $(\hat{N}(f),\hat{N}(h_{n}))$ converges in
distribution, as $n\rightarrow \infty $, to $(\hat{N}(f),Z)$, where $Z\sim %
\mathscr{N}(0,1)$ is independent of $N$. To see this, we start with the
explicit formula (see e.g. \cite[formula (5.3.31)]{PeTa}): for every $%
\lambda ,\gamma \in \mathbb{R}$ 
\begin{eqnarray*}
\psi _{n}(\lambda ,\gamma )\!:= &&\!E[\exp (i\lambda \hat{N}(f)+\gamma \hat{N%
}(h_{n}))] \\
&=&\exp \left[ \int_{\Theta }\!\!\left[ e^{i\lambda f(x)+i\gamma
h_{n}(x)}\!-\!1\!-\!i(\lambda f(x)+\gamma h_{n}(x))\right] \mu (dx)\right] .
\end{eqnarray*}%
Our aim is to prove that, under the stated assumptions, 
\begin{equation*}
\lim_{n\rightarrow \infty }\log (\psi _{n}(\lambda ,\gamma ))=\int_{\Theta
}\!\!\left[ e^{i\lambda f(x)}-1-i\lambda f(x)\right] \mu (dx)-\frac{\gamma
^{2}}{2}.
\end{equation*}%
Standard computations show that 
\begin{eqnarray*}
&&\Big|\log (\psi _{n}(\lambda ,\gamma ))-\big\{ \int_{\Theta }\!\!\left[
e^{i\lambda f(x)}-1-i\lambda f(x)\right] \mu (dx)-\frac{\gamma ^{2}}{2}%
\big\} \Big| \\
&\leq &\Big|\frac{\gamma ^{2}}{2}-\frac{\gamma ^{2}}{2}\int_{\Theta
}h_{n}(x)^{2}\mu (dx)\Big|+\left\vert \gamma \lambda \right\vert |\langle
h_{n},f\rangle _{L^{2}(\mu )}|+\frac{|\gamma|^3}{6}\int_{\Theta} |
h_n(x)|^3\mu(dx)\text{ }.
\end{eqnarray*}%
Since $\int_{\Theta }|h_{n}(x)|^{3}\mu(dx) \rightarrow 0$ and the mapping $%
n\mapsto \Vert h_{n}\Vert _{L^{2}(\mu )}^{2}$ is bounded, one has that $%
\langle h_{n},f\rangle _{L^{2}(\mu )}\rightarrow 0$, and the conclusion
follows by using the fact that $\Vert h_{n}\Vert _{L^{2}(\mu
)}^{2}\rightarrow 1$ by assumption.
\end{proof}

\section{Needlet coefficients}

\label{s:needletscoeff}

\subsection{Background: the needlet construction}

\label{ss:needlets}

We now provide an overview of the construction of the set of needlets on the
unit sphere. The reader is referred to \cite[Chapter 10]{mp-book} for an
introduction to this topic. Relevant references on this subject are: the
seminal papers \cite{npw1, npw2}, where needlets have been first defined; 
\cite{gm1, gm2,gelmar,gelpes}, among others, for generalizations to
homogeneous spaces of compact groups and spin fiber bundles; \cite%
{bkmpAoS,bkmpBer,spalan,mayeli} for the analysis of needlets on spherical
Gaussian fields, and \cite{mpbb08,pbm06,dela08,feeney} for some (among many)
applications to cosmological and astrophysical issues; see also \cite%
{mcewen,rosca} for other approaches to spherical wavelets construction.

\medskip

\noindent\textit{(Spherical harmonics)} In Fourier analysis, the set of
spherical harmonics 
\begin{equation*}
\left\{ Y_{lm}:l\geq 0,m=-l,...,l\right\}
\end{equation*}
provides an orthonormal basis for the space of square-integrable functions
on the unit sphere $L^{2}\left( \mathbb{S}^{2}, dx \right) :=L^{2}\left( 
\mathbb{S}^{2}\right) $, where $dx$ stands for the Lebesgue measure on $%
\mathbb{S}^2$ (see for instance \cite{adler, kookim, mp-book, steinweiss}).
Spherical harmonics are defined as the eigenfunctions of the spherical
Laplacian $\Delta _{S^{2}}$ corresponding to eigenvalues $-l\left(
l+1\right) $, e.g. $\Delta _{S^{2}}Y_{lm}=-l(l+1)Y_{lm} $, see again \cite%
{mp-book, steinweiss, VMK} for analytic expressions and more details and
properties. For every $l\geq 0$, we define $\mathcal{K}_l$ as the linear
space given by the restriction to the sphere of the polynomials with degree
at most $l$. Plainly, one has that 
\begin{equation*}
\mathcal{K}_l = \bigoplus_{k=0}^l \mathrm{span} \left\{ Y_{km}:
m=-k,...,k\right\},
\end{equation*}
where the direct sum is in the sense of $L^{2}\left( \mathbb{S}^{2}\right)$.

\medskip

\noindent \textit{(Cubature points)} It is well-known that for every integer $%
l=1,2,... $ there exists a finite set of \textit{cubature points} $\mathcal{Q%
}_{l}\subset \mathbb{S}^{2}$, as well as a collection of \textit{weights} $%
\{\lambda _{\eta }\}$, indexed by the elements of $\mathcal{Q}_{l}$, such
that 
\begin{equation*}
\forall f\in \mathcal{K}_{l},\quad \int_{\mathbb{S}^{2}}f(x)dx=\sum_{\eta
\in \mathcal{Q}_{l}}\lambda _{\eta }f(\eta ).
\end{equation*}%
Now fix $B>1$, and write $[x]$ to indicate the integer part of a given real $%
x$. In what follows, we shall denote by $\mathcal{X}_{j}=\{\xi _{jk}\}$ and $%
\{\lambda _{jk}\}$, respectively, the set $\mathcal{Q}_{[2B^{j+1}]}$ and the
associated class of weights. We also write $K_{j}=card\{\mathcal{X}_{j}\}$.
As proved in \cite{npw1, npw2}, cubature points and weights can be chosen to
satisfy%
\begin{equation}
\lambda _{jk}\approx B^{-2j}\text{ },\text{ }K_{j}\approx B^{2j}\text{ ,}
\label{Njdef}
\end{equation}%
where by $a\approx b$, we mean that there exists $c_{1},c_{2}>0$ such that $%
c_{1}a\leq b\leq c_{2}a$ (see also e.g. \cite{bkmpAoSb, pesenson1,pesenson2} and \cite[Chapter 10]{mp-book}).

\medskip

\noindent \textit{(Spherical needlets)} Fix $B>1$ as before, as well as a
real-valued mapping $b$ on $(0,\infty )$. We assume that $b$ verifies the
following properties: (i) the function $b\left( \cdot \right) $ has compact
support in $\left[ B^{-1},B\right] $ (in such a way that the mapping $%
l\mapsto b\left( \frac{l}{B^{j}}\right) $ has compact support in $l\in \left[
B^{j-1},B^{j+1}\right] $) (ii) for every $\xi \geq 1$, $\sum_{j=0}^{\infty
}b^{2}(\xi B^{-j})=1$ (\textit{partition of unit property}), and (iii) $%
b\left( \cdot \right) \in C^{\infty }\left( 0,\infty \right) $. The
collection of spherical needlets $\{\psi _{jk}\}$, associated with $B$ and $%
b(\cdot )$, are then defined as a weighted convolution of the projection
operator\ $L_{l}(\left\langle x,y\right\rangle )=\sum_{m=-l}^{l}\overline{Y}%
_{lm}\left( x\right) Y_{lm}\left( y\right) $, that is 
\begin{equation}
\psi _{jk}\left( x\right) :=\sqrt{\lambda _{jk}}\sum_{l}b\left( \frac{l}{%
B^{j}}\right) L_{l}(\left\langle x,\xi _{jk}\right\rangle )\text{ ,}
\label{need-def}
\end{equation}

\medskip

\noindent \textit{(Localization)} The properties of $b$ entail the following
quasi-exponential \textit{localization property} (see \cite{npw1} or \cite[%
Section 13.3]{mp-book}): for any $\tau =1,2,...$ there exists $\kappa _{\tau
}>0$ such that for any $x\in \mathbb{S}^{2}$, 
\begin{equation}
\left\vert \psi _{jk}(x)\right\vert \leq \frac{\kappa _{\tau }B^{j}}{\left(
1+B^{j}\arccos \left( \left\langle x,\xi _{jk}\right\rangle \right) \right)
^{\tau }}\text{,}  \label{e:mainlocalization}
\end{equation}%
where $d(x,y):=\arccos \left( \left\langle x,y\right\rangle \right) $ is the
spherical distance. From localization, the following bound can be
established on the $L_{p}\left( \mathbb{S}^{2}\right) $ norms: for all $%
1\leq p\leq +\infty $, there exist two positive constants $q_{p}$ and $%
q_{p}^{\prime }$ such that 
\begin{equation}
q_{p}B^{j\left( 1-\frac{2}{p}\right) }\leq \left\Vert \psi _{jk}\right\Vert
_{L_{p}\left( \mathbb{S}^{2}\right) }\leq q_{p}^{\prime }B^{j\left( 1-\frac{2%
}{p}\right) }.  \label{boundnorm}
\end{equation}

\medskip

\noindent \textit{(Needlets as frames)} Finally, the fact that $b$ is a
partition of unit, allows on to deduce the following \textit{reconstruction
formula} (see again \cite{npw1}): for $f\in L^{2}\left( \mathbb{S}%
^{2}\right) $: 
\begin{equation*}
f(x)=\sum_{j,k}\beta _{jk}\psi _{jk}(x)\text{ ,}
\end{equation*}%
where the convergence of the series is in $L^{2}(\mathbb{S}^{2})$, and 
\begin{equation}
\beta _{jk}:=\left\langle f,\psi _{jk}\right\rangle _{L_{2}\left( \mathbb{S}%
^{2}\right) }=\int_{\mathbb{S}^{2}}f\left( x\right) \psi _{jk}\left(
x\right) dx\text{ ,}  \label{needcoeffic}
\end{equation}%
represents the so-called \textit{needlet coefficient} of index $j,k$.

\subsection{Two motivations: density estimates and point sources}

The principal aim of this paper is to establish multidimensional asymptotic
results for some possibly randomized version of random variables of the type 
\begin{equation}
\widehat{\beta }_{jk}=\widehat{\beta }_{jk}^{(n)}=\frac{1}{n}%
\sum_{i=1}^{n}\psi _{jk}\left( X_{i}\right) \text{,}\quad
j=1,2,...,\,\,k=1,...,K_{j},  \label{e:betahat}
\end{equation}%
where the function $\psi _{jk}$ is defined according to (\ref{need-def}),
and $\{X_{i}:i\geq 1\}$ is some adequate sequence of i.i.d. random
variables. We may also study the asymptotic behaviour, as $t\rightarrow
\infty $, of multi-dimensional object of the type $\left\{ \widehat{\beta }%
_{jk},\text{ }k=1,2,...,K_{j}(t)\right\} $, where $t\mapsto K_{j}(t)$ is a
non-decreasing mapping possibly diverging to infinity, and $j$ may change
with $t$. In other words, as happens in realistic experimental
circumstances, we may decide to focus on a growing number of coefficients as
the number of (expected) events increase. Two strong motivations for this
analysis, both coming from statistical applications, are detailed below.

\medskip

\noindent {(\textbf{Density estimates}}) Consider a density function $f$ on
the sphere $\mathbb{S}^{2}$, that is: $f$ is a mapping from $\mathbb{S}^{2}$
into $\mathbb{R}_{+}$, verifying $\int_{\mathbb{S}^{2}}f(x)dx=1$, where $dx$
indicates the Lebesgue measure on $\mathbb{S}^{2}$. Let $\left\{
X_{i}:i=1,...,n\right\} $ be a collection of i.i.d. observations with values
in $\mathbb{S}^{2}$ with common distribution given by $f(x)dx$. A classical
statistical problem, considered for instance by \cite{bkmpAoSb,
kerkyphampic, kueh}, concerns the estimation of $f$ by \textit{%
wavelets/needlets thresholding techniques}. To this aim, keeping in mind the
notation (\ref{e:betahat}), one uses (\cite{DJKP}, \cite{HKPT}) the
following estimator of $f$: 
\begin{equation*}
\widehat{f}(x)=\sum_{jk}\widehat{\beta }_{jk}^{H}\psi _{jk}\left( x\right) 
\text{, }\quad \widehat{\beta }_{jk}^{H}:=\widehat{\beta }_{jk}\mathbb{I}%
_{\left\{ \left\vert \widehat{\beta }_{jk}\right\vert \geq ct_{n}\right\} }%
\text{,}
\end{equation*}%
where $t_{n}=\sqrt{\log n/n}$ and $c$ is a constant to be determined.
Finite-sample approximations on the distributions of $\widehat{\beta }_{jk}$
can then be instrumental for the exact determination of the thresholding
value $ct_{n}$, see e.g. \cite{DJKP, HKPT}.

\medskip

\noindent \textbf{(Searching for point sources)} The joint distribution of
the coefficients $\{\widehat{\beta }_{jk}\}$ (as defined in (\ref{e:betahat}%
)) is required in statistical procedures devised for the research of
so-called \textit{point sources}, again for instance in an astrophysical
context (see for instance \cite{starck}). The physical issue can be
formalized as follows:

\begin{itemize}
\item[--] Under the null hypothesis, we are observing a background of cosmic
rays governed by a Poisson measure on the sphere $\mathbb{S}^{2}$, with the
form of the measure $N_{t}(\cdot )$ defined in (\ref{e:nt}) for some $t>0$.
In particular, $N_{t}$ is built from a measure $N$ verifying Assumption (\ref%
{ass:general}), and the intensity of $\mu _{t}(dx)=E[N_{t}(dx)]$ is given by
the absolutely continuous measure $R_{t}\cdot f(x)dx$, where $R_{t}>0$ and $%
f $ is a density on the sphere. This situation corresponds, for instance, to
the presence of a diffuse background of cosmological emissions.

\item[--] Under the alternative hypothesis, the background of cosmic rays is
generated by a Poisson point measure of the type: 
\begin{equation*}
N_{t}^{\ast }(A)=N_{t}(A)+\sum_{p=1}^{P}N^{(p)} _{t}\int_{A}\delta _{\xi
_{p}}(x)dx\text{ ,}
\end{equation*}%
where $\left\{ \xi _{1},...,\xi _{P}\right\} \subset \mathbb{S}^{2},$ each
mapping $t\mapsto {N}^{(p)} _{t}$ is an independent Poisson process over $%
\left[ 0,\infty \right) $ with intensity $\lambda_p$, and%
\begin{equation*}
\left\{ \int_{A}\delta _{\xi _{p}}(x)dx=1\right\} \Longleftrightarrow
\left\{ \xi _{p}\in A\right\} \text{ .}
\end{equation*}%
\noindent In this case, one has that $N_{t}^{\ast }$ is a Poisson measure
with atomic intensity 
\begin{equation*}
\mu^* _{t}(A) := E[N_t^*(A)]=R_{t} \int_{A}f(x)dx+\sum_{p=1}^{P}\lambda_pt
\cdot \int_{A}\delta _{\xi _{p}}(x)dx\text{ .}
\end{equation*}
\end{itemize}

In this context, the informal expression \textquotedblleft searching for
point sources\textquotedblright\ can then be translated into
\textquotedblleft testing for $P=0"$ or \textquotedblleft jointly testing
for $\lambda _{p}>0$ at $p=1,...,P".$ The number $P$ and the locations $%
\left\{ \xi _{1},...\xi _{P}\right\} $ can be in general known or unknown.
We refer to \cite{iuppa, scodeller2} for astrophysical applications of these
ideas.

\begin{remark}
\textrm{\textrm{In order to directly apply the findings of \cite{PSTU,
PecZheng}, in what follows we shall focus on a randomized version of (\ref%
{e:betahat}), where $n$ is replaced by an independent Poisson number whose
parameter diverges to infinity. Also, we will prefer a deterministic
normalization over a random one. As formally shown in the discussion to
follow, the resulting randomized coefficients can be neatly put into the
framework of Section \ref{s:prm}. }}
\end{remark}

\subsection{Needlet coefficients as Wiener-It\^o integrals}

Let $N$ be a Poisson measure on $\mathbb{R}_{+}\times \mathbb{S}^{2}$
satisfying the requirements of Assumption \ref{ass:general} (in particular,
the intensity of $N$ has the form $\rho \times \nu $, where $\nu (dx)=f(x)dx$%
, for some probability density $f$ on the sphere, and one writes $R_{t}=\rho
([0,t])$, $t>0$). For every $t>0$, let the Poisson measure $N_{t}$ on $%
\mathbb{S}^{2}$ be defined as in (\ref{e:nt}). For every $j\geq 1$ and every 
$k=1,...,N_{j}$, consider the function $\psi _{jk}$ defined in (\ref%
{need-def}), and observe that $\psi _{jk}$ is trivially an element of $L^{3}(%
\mathbb{S}^{2},\nu )\cap L^{2}(\mathbb{S}^{2},\nu )\cap L^{1}(\mathbb{S}%
^{2},\nu )$. We write 
\begin{equation*}
\sigma _{jk}^{2}:=\int_{\mathbb{S}^{2}}\psi _{jk}^{2}\left( x\right) f(x)dx%
\text{ , }b_{jk}:=\int_{\mathbb{S}^{2}}\psi _{jk}\left( x\right) f(x)dx\text{
}.
\end{equation*}%
Observe that, if $f(x)=\frac{1}{4\pi }$ (that is, the uniform density on the
sphere), then $b_{jk}=0$ for every $j>1$. On the other hand, under (\ref{AVA}%
), 
\begin{equation}
\zeta _{1}\left\Vert \psi _{jk}\left( .\right) \right\Vert _{L^{2}}^{2}\leq
\sigma _{jk}^{2}\leq \zeta _{2}\left\Vert \psi _{jk}\left( .\right)
\right\Vert _{L^{2}}^{2}\text{ .}  \label{ultima}
\end{equation}
Note that (see \ref{boundnorm}) the $L^{2}$-norm of $\left\{ \psi
_{jk}\right\} $ is uniformly bounded above and below, and therefore the same
is true for $\left\{ \sigma _{jk}^{2}\right\} $ (indeed, there exists $%
\kappa >0,$ independent of $j$ and $k,$ such that $0<\kappa <\left\Vert \psi
_{jk}\right\Vert _{L^{2}\left( \mathbb{S}^{2}\right) }^{2}<1).$ For every $%
t>0$ and every $j,k$, we introduce the kernel 
\begin{equation}
h_{jk}^{\left( R_{t}\right) }(x)=\frac{\psi _{jk}\left( x\right) }{\sqrt{%
R_{t}}\sigma _{jk}},\quad x\in \mathbb{S}^{2}\text{ ,}  \label{crucdef1}
\end{equation}%
and write%
\begin{equation}
\widetilde{\beta }_{jk}^{\left( R_{t}\right) }:=\hat{N}_{t}\left(
h_{jk}^{\left( R_{t}\right) }\right) \!\!=\!\!\int_{\mathbb{S}%
^{2}}h_{jk}^{\left( R_{t}\right) }\left( x\right) \hat{N}_{t}\left(
dx\right) \ =\!\!\!\sum_{x\in \mathrm{supp}(N_{t})}h_{jk}^{\left(
R_{t}\right) }(x)-R_{t}\cdot \int_{\mathbb{S}^{2}}h_{jk}^{\left(
R_{t}\right) }(x)\nu (dx)\text{ ,}  \label{crucdef2}
\end{equation}%
In view of Remark \ref{r:ent}-(ii), the random variable $\widetilde{\beta }%
_{jk}^{\left( R_{t}\right) }$ can always be represented in the form 
\begin{equation*}
\widetilde{\beta }_{jk}^{\left( R_{t}\right) }=\frac{\left( \sum_{i=1}^{{N}%
_{t}(\mathbb{S}^{2})}\psi _{jk}\left( X_{i}\right) -R_{t}b_{jk}\right) }{%
\sqrt{R_{t}}\sigma _{jk}},
\end{equation*}%
where $\{X_{i}:i\geq 1\}$ is a sequence of i.i.d. random variables with
common distribution $\nu $, and independent of the Poisson random variable $%
\hat{N}_{t}(\mathbb{S}^{2})$. Moreover, the following relations are
immediately checked: 
\begin{equation}
E\widetilde{\beta }_{jk}^{\left( R_{t}\right) }=0\text{, }\quad E[(%
\widetilde{\beta }_{jk}^{\left( R_{t}\right) })^{2}]=1\text{ .}
\label{varbeta}
\end{equation}

\begin{remark}
\textrm{\textrm{Using the notation (\ref{e:betahat}), we have that%
\begin{equation*}
\widetilde{\beta }_{jk}^{\left( R_{t}\right) }=\frac{\left( N_{t}(\mathbb{S}%
^{2})\times \widehat{\beta }_{jk}^{(N_{t}(\mathbb{S}^{2}))}-R_{t}b_{jk}%
\right) }{\sqrt{R_{t}}\sigma _{jk}}\text{ .}
\end{equation*}%
}}
\end{remark}

\section{Bounds in dimension one}

\label{s:dim1}

We are now going to apply the content of Theorem \ref{t:pstupz}-(1) to the
random variables $\widetilde{\beta }_{jk}^{\left( R_{t}\right) }$ introduced
in the previous section. In the next statement, we write $Z\sim \mathcal{N}%
(0,1)$ to indicate a centered Gaussian random variable with unit variance.
Recall that $\zeta _{2}:=\sup_{x\in \mathbb{S}^{2}}\left\vert f\left(
x\right) \right\vert $, $p\geq 1$, and that the constants $%
q_{p},q_{p}^{\prime }$ have been defined in (\ref{boundnorm}).

\begin{proposition}
\label{p:uni} For every $j,k$ and every $t>0$, one has that 
\begin{equation*}
d_{W}\left( \widetilde{\beta }_{jk}^{\left( R_{t}\right) },Z\right) \leq 
\frac{(q^{\prime }_{3})^3\zeta _{2}B^{j}}{\sqrt{R_{t}}\sigma _{jk}^{3}}\text{
.}
\end{equation*}%
It follows that for any sequence $(j(n),k(n),t(n)),$ $\widetilde{\beta }%
_{j(n)k(n)}^{\left( R_{t(n)}\right) }$ converges in distribution to $Z$, as $%
n\rightarrow \infty $, provided $B^{2j(n)}=o(R_{t(n)}).$ The convergence is $%
\sigma (N)$-stable, in the sense of Proposition \ref{p:stable}.
\end{proposition}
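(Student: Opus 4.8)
The plan is to apply Theorem~\ref{t:pstupz}-(1) directly, since by construction $\widetilde{\beta}_{jk}^{\left(R_{t}\right)}=\hat{N}_{t}(h_{jk}^{\left(R_{t}\right)})$ with the kernel $h_{jk}^{\left(R_{t}\right)}$ of (\ref{crucdef1}). Taking $h=h_{jk}^{\left(R_{t}\right)}$ in the bound (\ref{1stChUB}), the task reduces to controlling its two terms.

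First I would show that the first term vanishes identically. Recalling from (\ref{e:mut}) that $\mu_{t}(dx)=R_{t}f(x)dx$, a direct computation gives
\[
\|h_{jk}^{\left(R_{t}\right)}\|_{L^{2}(\mathbb{S}^{2},\mu_{t})}^{2}=\frac{1}{R_{t}\sigma_{jk}^{2}}\int_{\mathbb{S}^{2}}\psi_{jk}^{2}(x)\,R_{t}f(x)\,dx=\frac{\sigma_{jk}^{2}}{\sigma_{jk}^{2}}=1,
\]
by the very definition of $\sigma_{jk}^{2}$; this is of course consistent with the variance normalization (\ref{varbeta}). Hence $\bigl|1-\|h_{jk}^{\left(R_{t}\right)}\|_{L^{2}(\mu_{t})}^{2}\bigr|=0$.

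Next I would estimate the third-moment term. The powers of $R_{t}$ partly cancel, leaving
\[
\int_{\mathbb{S}^{2}}|h_{jk}^{\left(R_{t}\right)}(z)|^{3}\mu_{t}(dz)=\frac{1}{\sqrt{R_{t}}\,\sigma_{jk}^{3}}\int_{\mathbb{S}^{2}}|\psi_{jk}(z)|^{3}f(z)\,dz.
\]
Bounding $f\leq\zeta_{2}$ via (\ref{AVA}) and then invoking the $L^{3}$-norm estimate (\ref{boundnorm}) with $p=3$, namely $\|\psi_{jk}\|_{L^{3}(\mathbb{S}^{2})}\leq q_{3}^{\prime}B^{j/3}$, yields
\[
\int_{\mathbb{S}^{2}}|h_{jk}^{\left(R_{t}\right)}(z)|^{3}\mu_{t}(dz)\leq\frac{\zeta_{2}}{\sqrt{R_{t}}\,\sigma_{jk}^{3}}\|\psi_{jk}\|_{L^{3}(\mathbb{S}^{2})}^{3}\leq\frac{(q_{3}^{\prime})^{3}\zeta_{2}B^{j}}{\sqrt{R_{t}}\,\sigma_{jk}^{3}},
\]
which is exactly the asserted bound.

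For the asymptotic statement, I would use that under (\ref{AVA}) the $\sigma_{jk}^{2}$ are bounded uniformly away from zero (as recorded after (\ref{ultima})), so the bound is $O\!\left(B^{j(n)}/\sqrt{R_{t(n)}}\right)$, which tends to $0$ precisely when $B^{2j(n)}=o(R_{t(n)})$, giving convergence in distribution. Finally, $\sigma(N)$-stability follows from Proposition~\ref{p:stable}: writing $\widetilde{\beta}_{jk}^{\left(R_{t}\right)}=\hat{N}(g)$ with $g(s,x)=\mathbf{1}_{[0,t]}(s)\,h_{jk}^{\left(R_{t}\right)}(x)$, one checks that $\|g\|_{L^{2}(\mu)}^{2}=R_{t}\,\|h_{jk}^{\left(R_{t}\right)}\|_{L^{2}(\nu)}^{2}=1$ along the sequence and that $\int_{\Theta}|g|^{3}d\mu\to0$ under the same condition, so the hypotheses of Proposition~\ref{p:stable} are met. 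I do not expect a substantial obstacle here; the only point demanding mild care is the bookkeeping in the stability step, matching the compensated integral $\hat{N}_{t}$ against the product measure $\mu=\rho\times\nu$ so that the $L^{2}$ normalization reads exactly $1$.
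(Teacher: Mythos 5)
Your proposal is correct and follows essentially the same route as the paper: apply the Wasserstein bound of Theorem \ref{t:pstupz}-(1) to the kernel $h_{jk}^{(R_{t})}$, note that the variance normalization kills the first term, and bound the third-moment term via (\ref{AVA}) and (\ref{boundnorm}) with $p=3$. Your write-up is in fact slightly more explicit than the paper's (which silently drops the vanishing $L^{2}$ term and leaves the stability verification implicit), but there is no substantive difference in the argument.
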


\begin{proof}
Using (\ref{crucdef1})--(\ref{crucdef2}) together with (\ref{d2}) and (\ref%
{AVA}), 
\begin{eqnarray*}
d_{W}\left( \widetilde{\beta }_{jk}^{\left( R_{t}\right) },Z\right) &\leq
&\int_{\mathbb{S}^{2}}\left\vert h_{jk}^{\left( R_{t}\right) }\left(
x\right) \right\vert ^{3}\mu _{t}\left( dx\right) \\
&=&\frac{R_{t}}{\sqrt{R^3_{t}}\sigma _{jk}^{3}}\int_{\mathbb{S}%
^{2}}\left\vert \psi _{jk}\left( x\right) \right\vert ^{3}f\left( x\right)
dx\leq \frac{\zeta _{2}}{\sqrt{R_{t}}\sigma _{jk}^{3}}\left\Vert \psi
_{jk}\right\Vert _{L^{3}\left( \mathbb{S}^{2}\right) }^{3} \\
&\leq &\frac{(q^{\prime }_{3})^3\zeta _{2}B^{j}}{\sqrt{R_{t}}\sigma _{jk}^{3}%
}\text{,}
\end{eqnarray*}%
where in the last inequality we use the property (\ref{boundnorm}) with $p=3$
to have: 
\begin{equation*}
\left\Vert \psi _{jk}\right\Vert _{L^{3}\left( \mathbb{S}^{2}\right)
}^{3}\leq (q^{\prime }_{3})^3B^{3j\left( 1-\frac{2}{3}\right) }=(q^{\prime
}_{3})^3B^{j}\text{.}
\end{equation*}%
The last part of the statement follows from the fact that the topology
induced by the Wasserstein distance (on the class of probability
distributions on the real line) is strictly stronger than the topology of
convergence in law.
\end{proof}

\begin{remark}
\textrm{\textrm{For $f(x)\equiv \left\{ 4\pi \right\} ^{-1}$ we have 
\begin{equation*}
\sigma _{jk}^{2}=\frac{1}{4\pi }\int_{\mathbb{S}^{2}}\psi _{jk}^{2}\left(
x\right) dx=\left\Vert \psi _{jk}\right\Vert _{L^{2}\left( \mathbb{S}%
^{2}\right) }^{2}\text{ ,}
\end{equation*}%
and more generally, under (\ref{AVA}), 
\begin{equation}  \label{e:henry}
d_{W}\left( \widetilde{\beta }_{jk}^{\left( R_{t}\right) },Z\right) \leq 
\frac{B^{j}}{\sqrt{R_{t}}}\frac{(q^{\prime }_3)^3\zeta _{2}}{\zeta
_{1}^{3/2}\left\Vert \psi _{jk}\right\Vert _{L^{2}\left( \mathbb{S}%
^{2}\right) }^{3/2}} :=\gamma(j,k,t)\ .
\end{equation}%
}}
\end{remark}

\begin{remark}
\label{importantremark}\textrm{\textrm{The previous result can be given the
following heuristic interpretation. The factor $B^{-j}$ can be viewed as the
\textquotedblleft effective scale\textquotedblright\ of the wavelet, i.e. it
is the radius of the region centred at $\xi _{jk}$ where the value of the
wavelet function is not negligible. Because needlets are isotropic, the
\textquotedblleft effective area\textquotedblright\ is of order $B^{-2j}.$
For governing measures with density which is bounded and bounded away from
zero, the expected number of observations on a spherical cap of radius $%
B^{-j}$ around $\xi _{jk}$ is hence given by 
\begin{equation*}
E\left[ card\left\{ X_{i}:d(X_{i},\xi _{jk})\leq B^{-j}\right\} \right]
\simeq R_{t}\int_{d(x,\xi _{jk})\leq B^{-j}}f(x)dx\text{ ,}
\end{equation*}%
\begin{equation*}
\zeta _{1}B^{-2j}R_{t}\leq R_{t}\int_{d(x,\xi _{jk})\leq B^{-j}}f(x)dx\leq
\zeta _{2}B^{-2j}R_{t}\text{ ,}
\end{equation*}%
using \cite[equation (8)]{bkmpBer}. Because the Central Limit Theorem can
hold only when the effective number of observations grows to infinity, the
condition $B^{-2j}R_{t}\rightarrow \infty $ is quite expected. In the
thresholding literature, coefficients are usually considered up to the
frequency $J_{R}$ such that $B^{2J_{R}}\simeq R_{t}/\log R_{t},$ see for
instance \textrm{\cite{HKPT}} and \textrm{\cite{bkmpAoSb}}$;$ under these
circumstances, we have 
\begin{equation*}
d_{2}\left( \widetilde{\beta }_{J_{R}k}^{\left( R_{t}\right) },Z\right)
=O\left( \frac{1}{\sqrt{\log R_{t}}}\right) \longrightarrow 0\text{ for }%
R_{t}\longrightarrow +\infty \text{ .}
\end{equation*}%
Therefore $\widetilde{\beta }_{J_{R}k}^{\left( R_{t}\right) }$ does converge
in law to $Z$.}}
\end{remark}

\section{Multidimensional bounds}

\label{s:dimg1}We are now going to apply Part 2 of Theorem (\ref{t:pstupz})
to the computation of multidimensional Berry-Esseen bounds involving vectors
of needlet coefficients of the type (\ref{crucdef2}). After having proved
some technical estimates in Section \ref{ss:techlemma}, we will consider two
bounds. One is proved in Section \ref{ss:fixedd} by means of (\ref{d2}), and
it is well adapted to the case where the number of needlet coefficients, say 
$d$, is fixed. In Section \ref{ss:growingd}, we shall focus on (\ref{e:yepes}%
), and deduce a bound which is adapted to the case where the number $d$ is
possibly growing to infinity.

\subsection{A technical result}

\label{ss:techlemma}

The following estimate, allowing one to bound the covariance between any two
needlet coefficients, will be used throughout this section. We let the
notation and assumptions of the previous section prevail.

\begin{lemma}
\label{l:covtech} For any $j\geq 1$ and $k_{1}\neq k_{2}\leq K_{j}=card\{%
\mathcal{X}_{j}\}$ and every $\tau >0$, there exists a constant $\widetilde{C%
}_{\tau }>0$, solely depending on $\tau ,$ and such that 
\begin{equation*}
\big |\Gamma _{R_{t}}\left( k_{1},k_{2}\right) \big|:=\big|E\widetilde{\beta 
}_{jk_{1}}^{\left( R_{t}\right) }\widetilde{\beta }_{jk_{2}}^{\left(
R_{t}\right) }\big|\leq \frac{\widetilde{C}_{\tau }\zeta _{2}}{\sigma
_{jk_{1}}\sigma _{jk_{2}}\left( 1+B^{j}d\left( \xi _{jk_{1}},\xi
_{jk_{2}}\right) \right) ^{\tau }}\text{ .}
\end{equation*}
\end{lemma}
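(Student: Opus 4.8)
The plan is to start from the isometric structure of first-chaos integrals. By the covariance formula in Theorem \ref{t:pstupz}-(2) combined with (\ref{e:isometry}), (\ref{crucdef1}) and the form (\ref{e:mut}) of $\mu_t$, one has
\begin{equation*}
\Gamma_{R_t}(k_1,k_2)=\big\langle h_{jk_1}^{(R_t)},h_{jk_2}^{(R_t)}\big\rangle_{L^2(\mathbb{S}^2,\mu_t)}=\frac{1}{\sigma_{jk_1}\sigma_{jk_2}}\int_{\mathbb{S}^2}\psi_{jk_1}(x)\psi_{jk_2}(x)f(x)\,dx .
\end{equation*}
Notice that the normalizing factor $1/(\sqrt{R_t}\,\sigma_{jk})$ appearing twice in the kernels is cancelled by the factor $R_t$ carried by $\mu_t$, which is the reason the bound is $t$-free. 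Using the upper bound $f\leq\zeta_2$ from (\ref{AVA}), the whole problem then reduces to establishing the purely deterministic estimate
\begin{equation*}
\int_{\mathbb{S}^2}|\psi_{jk_1}(x)|\,|\psi_{jk_2}(x)|\,dx\leq\frac{C_\tau}{\left(1+B^j d(\xi_{jk_1},\xi_{jk_2})\right)^\tau},
\end{equation*}
for a constant depending only on $\tau$; the prefactors $\zeta_2$ and $1/(\sigma_{jk_1}\sigma_{jk_2})$ then reproduce exactly the right-hand side of the statement.

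For this integral I would invoke the localization estimate (\ref{e:mainlocalization}), but applied with the exponent raised to $\tau+3$ rather than $\tau$ (legitimate, since (\ref{e:mainlocalization}) holds for every integer); the extra decay is what will make the residual single-needlet integral below converge. Writing $d_i:=d(x,\xi_{jk_i})$ and $D:=d(\xi_{jk_1},\xi_{jk_2})$, the triangle inequality yields $\max(d_1,d_2)\geq D/2$, so on the region $\{d_1\leq d_2\}$ one has $d_2\geq D/2$, hence $1+B^jd_2\geq\tfrac12(1+B^jD)$ and therefore $(1+B^jd_2)^{-\tau}\leq 2^\tau(1+B^jD)^{-\tau}$. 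Splitting $\mathbb{S}^2$ into $\{d_1\leq d_2\}$ and $\{d_2<d_1\}$ and applying this bound on each piece (using the symmetric version on the second), the distance-dependent factor $(1+B^jD)^{-\tau}$ factors out, and one is left on each region with a residual integral of the form $\int_{\mathbb{S}^2}B^{2j}(1+B^jd_1)^{-(\tau+3)}\,dx$.

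The key step, and the one demanding the most care, is the bound on this residual integral, which must be shown to be finite \emph{uniformly in} $j$. Passing to geodesic polar coordinates centred at $\xi_{jk_1}$, using $\sin\theta\leq\theta$ and substituting $u=B^j\theta$, one obtains
\begin{equation*}
\int_{\mathbb{S}^2}\frac{B^{2j}}{(1+B^jd_1)^{\tau+3}}\,dx\leq 2\pi\int_0^\infty\frac{u}{(1+u)^{\tau+3}}\,du=:c_\tau<\infty,
\end{equation*}
a finite constant independent of $j$ precisely because the exponent exceeds $2$ (this is why the raised exponent $\tau+3$ was needed). Collecting the constants $\kappa_{\tau+3}$, $2^\tau$ and $c_\tau$ into a single $\widetilde{C}_\tau$ depending only on $\tau$ finishes the proof. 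The argument uses nothing beyond the quasi-exponential concentration of the needlets and the uniform boundedness of $f$, which is why the same reasoning extends to the more general kernel estimators mentioned in the Introduction.
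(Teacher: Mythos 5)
Your proof is correct and follows essentially the same route as the paper's: reduce to $\int_{\mathbb{S}^2}|\psi_{jk_1}||\psi_{jk_2}|\,dx$ via the isometry and $f\leq\zeta_2$, split the sphere according to which cubature point is closer, factor out $(1+B^jd(\xi_{jk_1},\xi_{jk_2}))^{-\tau}$, and control the residual single-needlet integral in polar coordinates with the $B^{2j}$ cancellation. The only (harmless) difference is that you cover all $\tau>0$ at once by invoking the localization bound with exponent $\tau+3$, whereas the paper proves the estimate for $\tau>2$ and deduces the general case from the monotonicity of $(1+B^jd(\xi_{jk_1},\xi_{jk_2}))^{-\tau}$ in $\tau$.
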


\begin{proof}
We focus on $\tau > 2;$ note that the inequality for any fixed value of $%
\tau $ immediately implies the result for all $\tau ^{\prime }<\tau .$ For $%
k_{1}\neq k_{2}$ we have:%
\begin{eqnarray*}
\left\vert \Gamma _{R_{t}}\left( k_{1},k_{2}\right) \right\vert
&=&\left\vert \frac{1}{R_{t}\sigma _{jk_{1}}\sigma _{jk_{2}}}\int_{\mathbb{S}%
^{2}}\psi _{jk_{1}}\left( x\right) \psi _{jk_{2}}\left( x\right) \mu
_{t}(dx)\right\vert \\
&=&\frac{R_{t}}{R_{t}\sigma _{jk_{1}}\sigma _{jk_{2}}}\left\vert \int_{%
\mathbb{S}^{2}}\psi _{jk_{1}}\left( x\right) \psi _{jk_{2}}\left( x\right)
f(x)dx\right\vert \\
&\leq &\frac{\zeta _{2}}{\sigma _{jk_{1}}\sigma _{jk_{2}}}\int_{\mathbb{S}%
^{2}}\left\vert \psi _{jk_{1}}\left( x\right) \right\vert \left\vert \psi
_{jk_{2}}\left( x\right) \right\vert dx\text{.}
\end{eqnarray*}%
Now we can use a classical argument (\cite{npw1},\cite{npw2}\cite{bkmpBer})
to show that, for any $\tau >2$, there exists $C_{\tau }>0$ such that:%
\begin{eqnarray*}
&&\left\langle |\psi _{jk_{1}}|,|\psi _{jk_{2}}|\right\rangle _{L^{2}\left( 
\mathbb{S}^{2}\right) }=\int_{\mathbb{S}^{2}}|\psi _{jk_{1}}\left( x\right)
||\psi _{jk_{2}}|\left( x\right) dx \\
&\leq &\kappa _{\tau }B^{2j}\int_{\mathbb{S}^{2}}\frac{1}{\left(
1+B^{j}d\left( x,\xi _{jk_{1}}\right) \right) ^{\tau }}\frac{1}{\left(
1+B^{j}d\left( x,\xi _{jk_{2}}\right) \right) ^{\tau }}dx\text{ .}
\end{eqnarray*}%
In order to evaluate this integral, we can for instance follow (\cite{npw1}%
), by splitting the sphere $\mathbb{S}^{2}$ into two regions:%
\begin{eqnarray*}
S_{1} &=&\left\{ x\in \mathbb{S}^{2}:d\left( x,\xi _{jk_{1}}\right) >d\left(
\xi _{jk_{1}},\xi _{jk_{2}}\right) /2\right\} \\
S_{2} &=&\left\{ x\in \mathbb{S}^{2}:d\left( x,\xi _{jk2}\right) >d\left(
\xi _{jk_{1}},\xi _{jk_{2}}\right) /2\right\} \text{.}
\end{eqnarray*}%
For what concerns the integral on $S_{1}$, we obtain:%
\begin{equation*}
\int_{S_{1}}\frac{1}{\left( 1+B^{j}d\left( x,\xi _{jk_{1}}\right) \right)
^{\tau }}\frac{1}{\left( 1+B^{j}d\left( x,\xi _{jk_{2}}\right) \right)
^{\tau }}dx\leq \frac{2^{\tau }}{\left( 1+B^{j}d\left( \xi _{jk_{1}},\xi
_{jk_{2}}\right) \right) ^{\tau }}\int_{S_{1}}\frac{dx}{\left(
1+B^{j}d\left( x,\xi _{jk_{2}}\right) \right) ^{\tau }}\text{.}
\end{equation*}%
One also has that 
\begin{eqnarray*}
\int_{S_{1}}\frac{dx}{\left( 1+B^{j}d\left( x,\xi _{jk_{2}}\right) \right)
^{\tau }} &\leq &\int_{\mathbb{S}^{2}}\frac{dx}{\left( 1+B^{j}d\left( x,\xi
_{jk_{2}}\right) \right) ^{\tau }}=2\pi \int_{0}^{\pi }\frac{\sin \vartheta 
}{\left( 1+B^{j}\vartheta \right) ^{\tau }}d\vartheta \leq \\
&\leq &\frac{2\pi }{B^{2j}}\int_{0}^{\infty }\frac{y}{\left( 1+y\right)
^{\tau }}dy\leq \frac{2\pi }{B^{2j}}\left[ \int_{0}^{1}ydy+\int_{1}^{\infty
}y^{1-\tau }dy\right] \leq \\
&\leq &\frac{2\pi C}{B^{2j}}\text{.}
\end{eqnarray*}%
Because calculations on the region $S_{2}$ are exactly the same and because $%
\mathbb{S}^{2}\subset S_{1}\cup S_{2}$, we have that, for some constant $%
\widetilde{C}_{\tau }$ depending on $\tau $, 
\begin{equation*}
\left\langle |\psi _{jk_{1}}|,|\psi _{jk_{2}}|\right\rangle _{L^{2}\left( 
\mathbb{S}^{2}\right) }\leq \frac{\widetilde{C}_{\tau }}{\left(
1+B^{j}d\left( \xi _{jk_{1}},\xi _{jk_{2}}\right) \right) ^{\tau }}\text{ },
\end{equation*}%
yielding the desired conclusion.
\end{proof}

\begin{remark}
\textrm{Assuming that $d\left( \xi _{jk_{1}},\xi _{jk_{2}}\right) >\delta $
uniformly for all $j,$ we have immediately 
\begin{equation*}
\big|E\widetilde{\beta }_{jk_{1}}^{\left( R_{t}\right) }\widetilde{\beta }%
_{jk_{2}}^{\left( R_{t}\right) }\big|\leq \kappa _{\tau ,\zeta _{2}}^{\prime
}\times B^{-j\tau }\text{,}
\end{equation*}
where the constant $\kappa _{\tau ,\zeta _{2}}^{\prime}$ only depend on $%
\tau,\zeta_2$.}
\end{remark}

\begin{remark}
\textrm{The previous Lemma provides a tight bound, of some independent
interest, on the high frequency behaviour of covariances among wavelet
coefficients for Poisson random fields. For Gaussian isotropic random
fields, analogous results were provided by \cite{bkmpAoS}, in the case of
standard needlets (bounded support), and by \cite{spalan}--\cite{mayeli}, in
the \textquotedblleft Mexican\textquotedblright\ case where support may be
unbounded in multipole space. It should be noted how asymptotic
uncorrelation holds in much greater generality for Poisson random fields
than for Gaussian field: indeed in the latter case a regular variation
condition had to be imposed on the tail behaviour of the angular power
spectrum, and in the Mexican case this condition had to be strengthened
imposing an upper bound on the decay of the spectrum itself. The reason for
such discrepancy is easily understood: for Poisson random fields, non
overlapping regions are independent, whence (heuristically) localization in
pixel space is sufficient to ensure asymptotic uncorrelation; on the
contrary, in the Gaussian isotropic case different regions of the field are
correlated at any angular distance, and asymptotic uncorrelation for the
coefficients requires a much more delicate cancellation argument.}
\end{remark}

\subsection{Fixed dimension}

\label{ss:fixedd}

Fix $d\geq 2$ and $j\geq 1$, consider a fixed number of sampling points $%
\left\{ \xi _{jk_{1}},...,\xi _{jk_{d}}\right\} $, and define the associated 
$d$-dimensional vector 
\begin{equation*}
\widetilde{\beta }_{j\cdot }^{\left( R_{t}\right) }:=\big(\widetilde{\beta }%
_{jk_{1}}^{\left( R_{t}\right) },...,\widetilde{\beta }_{jk_{d}}^{\left(
R_{t}\right) }\big),
\end{equation*}%
whose covariance matrix will be denoted by $\Gamma _{t}$ (note that, by
construction, $\Gamma _{t}(i,i)=1$ for every $i=1,...,d$). Our aim is to
apply the rough bound (\ref{d2}) in order to estimate the distance between
the law of $\widetilde{\beta }_{j\cdot }^{\left( R_{t}\right) }$ and the law
of a random Gaussian vector $Z\sim \mathcal{N}_{d}(0,I_{d})$, where $C=I_{d}$
stands for the identity $d\times d$ matrix. Using Lemma \ref{l:covtech}, one
has the following basic estimates:

\begin{eqnarray}
\left\Vert C^{-1}\right\Vert _{op} &=&\left\Vert C\right\Vert _{op}^{\frac{1%
}{2}}=1\text{ ,}  \notag \\
\left\Vert C-\Gamma _{t}\right\Vert _{H.S.} &\leq &\sqrt{\sum_{k_{1}\neq
k_{2}=1}^{d}\left\{ E\left[ \widetilde{\beta }_{jk_{1}}^{\left( R_{t}\right)
}\widetilde{\beta }_{jk_{2}}^{\left( R_{t}\right) }\right] \right\} ^{2}} 
\notag \\
&\leq &d\sup_{k_{1}\neq k_{2}=1,...,d}\frac{1}{\sigma _{jk_{1}}\sigma
_{jk_{2}}}\frac{\widetilde{C}_{\tau }\zeta _{2}}{\left( 1+B^{j}d\left( \xi
_{jk_{1}},\xi _{jk_{2}}\right) \right) ^{\tau }}  \notag \\
&\leq &\frac{d}{\zeta _{1}q_{2}^{2}}\times \frac{\widetilde{C}_{\tau }\zeta
_{2}}{\left( 1+B^{j}\inf_{k_{1}\neq k_{2}=1,...,d}d\left( \xi _{jk_{1}},\xi
_{jk_{2}}\right) \right) ^{\tau }}=A(t)\text{ .}  \label{e:at2}
\end{eqnarray}
\ Applying (\ref{d2}) yields therefore that 
\begin{eqnarray*}
d_{2}\left( \widetilde{\beta }_{j}^{\left( R_{t}\right) },Z\right) &\leq
&A(t)+d^{2}\frac{\sqrt{2\pi }}{8}\sum_{k=1}^{d}R_{t}\int_{\mathbb{S}%
^{2}}\left\vert h_{jk}^{\left( R_{t}\right) }\left( x\right) \right\vert
^{3}f\left( x\right) dx \\
&=&A(t)+d^{2}\frac{\sqrt{2\pi }}{8}\frac{\zeta _{2}R_{t}}{\sqrt{R_{t}^{3}}}%
\sum_{k=1}^{d}\int_{\mathbb{S}^{2}}\frac{\left\vert \psi _{jk}\left(
x\right) \right\vert ^{3}}{\sigma _{jk}^{3}}dx \\
&\leq &A(t)+\frac{d^{3}\zeta _{2}}{\sqrt{R_{t}}\zeta _{1}^{3/2}q_{2}^{3}}%
\frac{\sqrt{2\pi }}{8}\left\Vert \psi _{jk}\right\Vert _{L^{3}\left( \mathbb{%
S}^{2}\right) }^{3} \\
&\leq &A(t)+\frac{(q_{3}^{\prime })^{3}d^{3}\zeta _{2}}{\sqrt{R_{t}}\zeta
_{1}^{3/2}q_{2}^{3}}\frac{\sqrt{2\pi }}{8}B^{j},
\end{eqnarray*}%
where we used (\ref{boundnorm}) and (\ref{ultima}) to yield $\sigma
_{jk}^{3}\geq \zeta _{1}^{3/2}q_{2}^{3}$. We write this result as a separate
statement.

\begin{proposition}
\label{p:multi} Under the above notation and assumptions,

\begin{equation*}
d_{2}\left( \widetilde{\beta }_{j}^{\left( R_{t}\right) },Z\right) \leq 
\frac{d\widetilde{C}_{\tau }\zeta _{2}B^{-j\tau }}{\zeta _{1}q_{2}^{2}\left(
1+\inf_{k_{1}\neq k_{2}=1,...,d}d\left( \xi _{jk_{1}},\xi _{jk_{2}}\right)
\right) ^{\tau }}+\frac{(q_{3}^{\prime })^{3}d^{3}\zeta _{2}}{\sqrt{R_{t}}%
\zeta _{1}^{3/2}q_{2}^{3}}\frac{\sqrt{2\pi }}{8}B^{j}\text{ .}
\end{equation*}
\end{proposition}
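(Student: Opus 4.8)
The plan is to specialize the multidimensional estimate (\ref{d2}) of Theorem \ref{t:pstupz}-(2) to the vector $\widetilde{\beta}_{j\cdot}^{(R_t)}$, taking as target covariance the identity matrix $C=I_d$. By (\ref{crucdef2}) each component is exactly a first-chaos integral, $\widetilde{\beta}_{jk_a}^{(R_t)}=\hat{N}_t(h_{jk_a}^{(R_t)})$, so the theorem applies verbatim with $h_{t,a}=h_{jk_a}^{(R_t)}$. The whole point of choosing $C=I_d$ is that it trivializes the operator-norm prefactors, since $\Vert C^{-1}\Vert_{op}=\Vert C\Vert_{op}=\Vert C\Vert_{op}^{1/2}=1$; the bound (\ref{d2}) then reduces to the sum of the Hilbert--Schmidt term $\Vert I_d-\Gamma_t\Vert_{H.S.}$ and the cubic term $\frac{d^2\sqrt{2\pi}}{8}\sum_{k}\int_{\mathbb{S}^2}|h_{jk}^{(R_t)}(x)|^3\mu_t(dx)$, so the argument splits into estimating these two pieces separately.

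For the covariance term the key remark is that the normalization (\ref{varbeta}) forces $\Gamma_t(i,i)=1$, so the diagonal of $I_d-\Gamma_t$ vanishes and only the $d(d-1)$ off-diagonal entries survive in the Hilbert--Schmidt norm. I would bound each entry $E[\widetilde{\beta}_{jk_1}^{(R_t)}\widetilde{\beta}_{jk_2}^{(R_t)}]$ by Lemma \ref{l:covtech}, crudely count the summands by $d^2$ so that the square root contributes a single factor $d$, and replace the variances by the uniform lower bound $\sigma_{jk}^2\geq\zeta_1 q_2^2$ obtained from (\ref{ultima}) together with (\ref{boundnorm}) at $p=2$. Taking the infimum over the pairwise distances $d(\xi_{jk_1},\xi_{jk_2})$ inside the localization denominator then yields precisely the quantity $A(t)$ displayed in (\ref{e:at2}), which the statement records, after isolating the decay $B^{-j\tau}$, as its first term.

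For the cubic term I would evaluate each integral exactly as in the one-dimensional computation behind Proposition \ref{p:uni}: since $\mu_t(dx)=R_t f(x)\,dx$ and $h_{jk}^{(R_t)}=\psi_{jk}/(\sqrt{R_t}\,\sigma_{jk})$ by (\ref{crucdef1}), the density bound (\ref{AVA}) gives $R_t\int_{\mathbb{S}^2}|h_{jk}^{(R_t)}|^3 f\,dx\leq \frac{\zeta_2}{\sqrt{R_t}\,\sigma_{jk}^3}\Vert\psi_{jk}\Vert_{L^3(\mathbb{S}^2)}^3$. Applying (\ref{boundnorm}) at $p=3$ to get $\Vert\psi_{jk}\Vert_{L^3(\mathbb{S}^2)}^3\leq(q_{3}^{\prime})^{3} B^j$, and bounding the denominator by $\sigma_{jk}^3\geq\zeta_1^{3/2}q_2^3$, each summand is at most $\frac{(q_{3}^{\prime})^{3}\zeta_2 B^j}{\sqrt{R_t}\,\zeta_1^{3/2}q_2^3}$; summing over the $d$ indices and combining with the $d^2$ prefactor produces the factor $d^3$ in the second term.

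The only genuinely substantial ingredient is the covariance estimate of Lemma \ref{l:covtech}, which is where the quasi-exponential localization of the needlet system is used; everything else is bookkeeping rendered painless by the choice $C=I_d$. I therefore expect no real obstacle beyond tracking constants, the one delicate point being the cosmetic rewriting of the localization denominator in the first term into the clean form quoted in the statement.
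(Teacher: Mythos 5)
Your proposal is correct and follows essentially the same route as the paper: it specializes the bound (\ref{d2}) with $C=I_{d}$, controls the Hilbert--Schmidt term through Lemma \ref{l:covtech} and the uniform lower bound $\sigma_{jk}^{2}\geq \zeta_{1}q_{2}^{2}$ to arrive at exactly the quantity $A(t)$ of (\ref{e:at2}), and treats the cubic term by the same $L^{3}$-norm computation as in Proposition \ref{p:uni}, with the $d^{2}\times d=d^{3}$ count. The only point of friction you correctly flag is the cosmetic passage from the denominator $\left(1+B^{j}\inf_{k_{1}\neq k_{2}}d\left(\xi_{jk_{1}},\xi_{jk_{2}}\right)\right)^{\tau}$ in $A(t)$ to the factored form $B^{-j\tau}\left(1+\inf_{k_{1}\neq k_{2}}d\left(\xi_{jk_{1}},\xi_{jk_{2}}\right)\right)^{\tau}$ displayed in the statement, which is a rewriting issue in the paper itself rather than a gap in your argument.
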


Because $\tau $ can be chosen arbitrarily large, it is immediately seen that
the leading term in the $d_{2}$ distance is decaying with the same rate as
in the univariate case, e.g. $B^{j}/\sqrt{R_{t}}.$ Assuming however that $%
d=d_{t},$ i.e. the case where the number of coefficients is itself growing
with $t$, the previous bound may become too large to be applicable. We shall
hence try to establish a tighter bound, as detailed in the next section.

\subsection{Growing dimension}

\label{ss:growingd}

In this section we allow for a growing number of coefficients to be
evaluated simultaneously, and investigate the bounds that can be obtained
under these circumstances. More precisely, we are now focussing on 
\begin{equation*}
\widetilde{\beta }_{j(t)\cdot }^{\left( R_{t}\right) }:=\big(\widetilde{%
\beta }_{j(t)k_{1}}^{\left( R_{t}\right) },...,\widetilde{\beta }%
_{j(t)k_{d_{t}}}^{\left( R_{t}\right) }\big),
\end{equation*}%
where $d_{t}\rightarrow \infty ,$ as $t\rightarrow \infty .$ Throughout the
sequel, we shall assume that the points at which these coefficients are
evaluated satisfy the condition: 
\begin{equation}
\inf_{k_{1}\neq k_{2}=1,...,d_{t}}d\left( \xi _{j(t)k_{1}},\xi
_{j(t)k_{2}}\right) \approx \frac{1}{\sqrt{d_{t}}}\text{ .}  \label{diego}
\end{equation}%
Condition (\ref{diego}) is rather minimal; in fact, the cubature points for
a standard needlet/wavelet construction can be taken to form a maximal $%
(d_{t})^{-1/2}$-net (see \cite{bkmpBer, gm1, npw1, pesenson1} for more
details and discussion). The following result is the main achievement of the
paper.

\begin{theorem}
\label{t:growing} Let the previous assumptions and notation prevail. Then
for all $\tau =2,3...,$ there exist positive constants $c$ and $c^{\prime },$
(depending on $\tau ,\zeta _{1},\zeta _{2}$ but not from $t,j(t),d(t)$) such
that we have%
\begin{equation}
d_{2}\left( \widetilde{\beta }_{j(t).}^{\left( R_{t}\right) },Z\right) \leq 
\frac{cd_{t}}{\left( 1+B^{j(t)}\inf_{k_{1}\neq k_{2}=1,...,d_{t}}d\left( \xi
_{j(t)k_{1}},\xi _{j(t)k_{2}}\right) \right) ^{\tau }}+\frac{\sqrt{2\pi }}{8}%
\frac{c^{\prime }d_{t}B^{j(t)}}{\zeta _{1}^{3/2}q_{2}^{3}\sqrt{R_{t}}}\text{ 
}.  \label{mainbound}
\end{equation}
\end{theorem}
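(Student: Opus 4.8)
The plan is to specialize the multidimensional Stein--Malliavin bound (\ref{e:yepes}) of Theorem \ref{t:pstupz}-(2) to the vector $\widetilde{\beta}_{j(t)\cdot}^{(R_t)}$, taking the target covariance to be the identity matrix $C=I_{d_t}$. Since $\|I_{d_t}\|_{op}=\|I_{d_t}^{-1}\|_{op}=1$, the right-hand side of (\ref{e:yepes}) collapses to the sum of two contributions: a Hilbert--Schmidt term $\|I_{d_t}-\Gamma_t\|_{H.S.}$ controlling the off-diagonal covariances, and a cubic integral term $\frac{\sqrt{2\pi}}{8}\sum_{i,j,k=1}^{d_t}\int_{\mathbb{S}^{2}}|h_i||h_j||h_k|\,\mu_t(dx)$, where $h_k=h_{j(t)k}^{(R_t)}$. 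These two pieces will produce, respectively, the first and second summand of (\ref{mainbound}), and I will bound them separately. The essential point in using (\ref{e:yepes}) rather than the rougher bound (\ref{d2}) is that the whole improvement (from the factor $d^3$ in Proposition \ref{p:multi} to the factor $d_t$ here) is gained in the second term.

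For the first term, the diagonal of $I_{d_t}-\Gamma_t$ vanishes by (\ref{varbeta}), so that $\|I_{d_t}-\Gamma_t\|_{H.S.}^2=\sum_{k_1\neq k_2}\Gamma_t(k_1,k_2)^2\le d_t^2\sup_{k_1\neq k_2}|\Gamma_t(k_1,k_2)|^2$. Inserting the covariance estimate of Lemma \ref{l:covtech} together with the uniform lower bound $\sigma_{jk}^2\ge\zeta_1 q_2^2$ coming from (\ref{boundnorm}) (with $p=2$) and (\ref{ultima}) gives $\|I_{d_t}-\Gamma_t\|_{H.S.}\le c\,d_t\,(1+B^{j(t)}\inf_{k_1\neq k_2}d(\xi_{j(t)k_1},\xi_{j(t)k_2}))^{-\tau}$, which is precisely the first summand; this part is routine and parallels the computation (\ref{e:at2}).

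For the second term, I first rewrite the triple sum as $\int_{\mathbb{S}^{2}}\big(\sum_{k=1}^{d_t}|h_k(x)|\big)^3\mu_t(dx)$. Substituting $h_k=\psi_{j(t)k}/(\sqrt{R_t}\sigma_{j(t)k})$ and $\mu_t=R_t f\,dx$, and using $\sigma_{jk}\ge\zeta_1^{1/2}q_2$ together with $f\le\zeta_2$, this is at most $\frac{\zeta_2}{\sqrt{R_t}\zeta_1^{3/2}q_2^3}\int_{\mathbb{S}^{2}}\Psi(x)^3\,dx$, where $\Psi(x):=\sum_{k=1}^{d_t}|\psi_{j(t)k}(x)|$. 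It therefore suffices to establish the key estimate $\int_{\mathbb{S}^{2}}\Psi^3\,dx\le C\,d_t\,B^{j(t)}$. I will derive this from the elementary inequality $\int\Psi^3\,dx\le(\sup_x\Psi)^2\int_{\mathbb{S}^{2}}\Psi\,dx$, controlling the two factors separately: from (\ref{boundnorm}) with $p=1$ one has $\int\Psi\,dx\le\sum_k\|\psi_{j(t)k}\|_{L^1}\le q_1'\,d_t\,B^{-j(t)}$, so everything reduces to the uniform pointwise bound $\sup_x\Psi(x)\le C\,B^{j(t)}$.

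Proving $\sup_x\Psi(x)\lesssim B^{j(t)}$ is the main obstacle. Plugging the quasi-exponential localization (\ref{e:mainlocalization}) yields $\Psi(x)\le\kappa_\tau B^{j(t)}\sum_{k}(1+B^{j(t)}d(x,\xi_{j(t)k}))^{-\tau}$, so I must show that the geometric sum $S(x):=\sum_{k}(1+B^{j(t)}d(x,\xi_{j(t)k}))^{-\tau}$ is bounded by a constant uniformly in $x$. Here the separation condition (\ref{diego}) is used: since the evaluation points are $\delta$-separated with $\delta\approx d_t^{-1/2}$, a packing argument shows that the number of them lying in the annulus $d(x,\cdot)\in[m\delta,(m+1)\delta)$ is $O(m+1)$, whence $S(x)\lesssim\sum_{m\ge 0}(m+1)(1+B^{j(t)}m\delta)^{-\tau}$. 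For $\tau>2$ this series converges, and because the points are cubature points one has $\delta\gtrsim B^{-j(t)}$ (equivalently $d_t\lesssim K_{j(t)}\approx B^{2j(t)}$, so $B^{j(t)}\delta\gtrsim 1$), which makes the sum $O(1)$ uniformly in $x$. Combining the two factors gives $\int\Psi^3\,dx\lesssim (B^{j(t)})^2\cdot d_t B^{-j(t)}=C\,d_t\,B^{j(t)}$, so the second term is at most $\frac{\sqrt{2\pi}}{8}\,c'\,d_t\,B^{j(t)}/(\zeta_1^{3/2}q_2^3\sqrt{R_t})$, completing (\ref{mainbound}). The only delicate points are the validity of the annulus-counting estimate (which needs both the lower separation and the near-equispacing in (\ref{diego})) and the convergence of the series, which forces $\tau>2$ — harmless, since $\tau$ may be taken arbitrarily large.
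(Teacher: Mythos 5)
Your proposal is correct and follows essentially the same route as the paper: the bound (\ref{e:yepes}) with $C=I_{d_t}$, the covariance term handled exactly as in (\ref{e:at2}) via Lemma \ref{l:covtech}, and the cubic term reduced to $\int\Psi^3\le(\sup\Psi)^2\int\Psi$ with the uniform estimate $\sup_x\sum_k|\psi_{j(t)k}(x)|\lesssim B^{j(t)}$ and $\int\Psi\lesssim d_tB^{-j(t)}$, which is precisely the paper's (\ref{uniformbound}) combined with the $L^1$ bound. The only (immaterial) difference is that you prove the uniform bound by an annulus-counting packing argument from (\ref{diego}), whereas the paper compares the localized sum with integrals over disjoint balls of radius $B^{-j(t)}$ around the cubature points, following Lemma 6 of \cite{bkmpAoS}; the two arguments rest on the same separation property.
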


\begin{proof}
In view of (\ref{e:yepes}) and (\ref{e:at2}), we just have to prove that the
quantity 
\begin{equation*}
\frac{\sqrt{2\pi }}{8}\frac{R_{t}}{\zeta _{1}^{3/2}q_{2}^{3}\sqrt{R_{t}^{3}}}%
\sum_{k_{1}k_{2}k_{3}}^{d_{t}}\int_{\mathbb{S}^{2}}\left\vert \psi
_{j(t)k_{1}}\left( z\right) \right\vert \left\vert \psi _{j(t)k_{2}}\left(
z\right) \right\vert \left\vert \psi _{j(t)k_{3}}\left( z\right) \right\vert
f(z)dz\text{{}}
\end{equation*}%
is smaller than the second summand on the RHS of (\ref{mainbound}). Now note
that%
\begin{equation*}
\sum_{k_{1}k_{2}k_{3}}^{d_{t}}\int_{\mathbb{S}^{2}}\left\vert \psi
_{j(t)k_{1}}\left( z\right) \right\vert \left\vert \psi _{j(t)k_{2}}\left(
z\right) \right\vert \left\vert \psi _{j(t)k_{3}}\left( z\right) \right\vert
dz\leq \sum_{\lambda }\int_{\mathcal{B}(\xi _{j(t)\lambda
},B^{-j(t)})}\left\{ \sum_{k}^{d_{t}}\left\vert \psi _{j(t)k}\left( z\right)
\right\vert \right\} ^{3}dz\text{ ,}
\end{equation*}%
where, for any $z\in \mathcal{B}(\xi _{j(t)\lambda },B^{-j(t)})$%
\begin{eqnarray*}
\sum_{k}^{d_{t}}\left\vert \psi _{j(t)k}\left( z\right) \right\vert &\leq
&\sum_{k}^{d_{t}}\frac{C_{\tau }B^{j(t)}}{\left\{ 1+B^{j(t)}d(\xi
_{j(t)k},z)\right\} ^{\tau }} \\
&\leq &C_{\tau }B^{j(t)}+\sum_{k:\xi _{j(t)k}\notin \mathcal{B}(\xi
_{j(t)\lambda },B^{-j(t)})}^{d_{t}}\frac{C_{\tau }B^{j(t)}}{\left\{
1+B^{j(t)}\left[ d(\xi _{j(t)k},\xi _{j(t)\lambda })-d(z,\xi _{j(t)\lambda })%
\right] \right\} ^{\tau }} \\
&\leq &C_{\tau }B^{j(t)}+\sum_{k:\xi _{j(t)k}\notin \mathcal{B}(\xi
_{j(t)\lambda },B^{-j(t)})}^{d_{t}}\frac{C_{\tau }B^{j(t)}}{\left\{
B^{j(t)}d(\xi _{j(t)k},\xi _{j(t)\lambda })\right\} ^{\tau }}\text{ .}
\end{eqnarray*}%
Now for $\xi _{j(t)k}\notin \mathcal{B}(\xi _{j(t)\lambda },B^{-j(t)}),$ $%
x\in \mathcal{B}(\xi _{j(t)k},B^{-j(t)}),$ we have by triangle inequality 
\begin{equation*}
d(\xi _{j(t)k},\xi _{j(t)\lambda })+d(\xi _{j(t)k},x)\geq d(\xi
_{j(t)\lambda },x),
\end{equation*}%
and because 
\begin{equation*}
d(\xi _{j(t)k},\xi _{j(t)\lambda })\geq d(\xi _{j(t)k},x),\text{ and }2d(\xi
_{j(t)k},\xi _{j(t)\lambda })\geq d(\xi _{j(t)\lambda },x)\text{ ,}
\end{equation*}%
we obtain 
\begin{eqnarray*}
&&\sum_{k:\xi _{j(t)k}\notin \mathcal{B}(\xi _{j(t)\lambda
},B^{-j(t)})}^{d_{t}}\frac{C_{\tau }B^{j(t)}}{\left\{ B^{j(t)}d(\xi
_{j(t)k},\xi _{j(t)\lambda })\right\} ^{\tau }} \\
&=&\sum_{k:\xi _{j(t)k}\notin \mathcal{B}(\xi _{j(t)\lambda
},B^{-j(t)})}^{d_{t}}\frac{1}{meas(\mathcal{B}(\xi _{j(t)k},B^{-j(t)}))}%
\int_{\mathcal{B}(\xi _{j(t)k},B^{-j(t)})}\frac{\kappa _{\tau }B^{j(t)}}{%
\left\{ B^{j(t)}d(\xi _{j(t)k},\xi _{j(t)\lambda })\right\} ^{\tau }}dx \\
&\leq &\sum_{k:\xi _{j(t)k}\notin \mathcal{B}(\xi _{j(t)\lambda
},B^{-j(t)})}^{d_{t}}\frac{1}{meas(\mathcal{B}(\xi _{j(t)k},B^{-j(t)}))}%
\int_{\mathcal{B}(\xi _{j(t)k},B^{-j(t)})}\frac{\kappa _{\tau }2^{\tau
}B^{j(t)}}{\left\{ B^{j(t)}d(\xi _{j(t)\lambda },x)\right\} ^{\tau }}dx\leq
\kappa _{\tau }^{\prime }B^{j(t)}\text{ ,}
\end{eqnarray*}%
arguing as in \cite{bkmpAoS}, Lemma 6. Hence%
\begin{equation}
\sum_{k}^{d_{t}}\left\vert \psi _{j(t)k}\left( z\right) \right\vert \leq
\kappa _{\tau }^{\prime \prime }B^{j(t)}\text{ ,}  \label{uniformbound}
\end{equation}%
uniformly over $z\in S^{2},$ which immediately provides the bound. 
\begin{equation*}
\sum_{\lambda }\int_{\mathcal{B}(\xi _{j(t)\lambda },B^{-j})}\left\{
\sum_{k}^{d_{t}}\left\vert \psi _{j(t)k}\left( z\right) \right\vert \right\}
^{3}dz\leq (\kappa _{\tau }^{\prime \prime }B^{j})^{3}\sum_{\lambda }\int_{%
\mathcal{B}(\xi _{j(t)\lambda },B^{-j(t)})}dz=(\kappa ^{\prime \prime \prime
}B^{j(t)})^{3}\text{ .}
\end{equation*}%
Finally, to establish the sharper constraint%
\begin{equation*}
\int_{\mathbb{S}^{2}}\left\{ \sum_{k}^{d_{t}}\left\vert \psi _{j(t)k}\left(
z\right) \right\vert \right\} ^{3}dz\leq \widetilde{\kappa }_{\tau
}d_{t}B^{j(t)},
\end{equation*}%
it is sufficient to note that, exploiting (\ref{uniformbound}) 
\begin{eqnarray*}
&&\sum_{k_{1}}\int_{\mathbb{S}^{2}}\left\vert \psi _{j(t)k_{1}}\left(
z\right) \right\vert \sum_{k_{2}}\left\vert \psi _{j(t)k_{2}}\left( z\right)
\right\vert \sum_{k_{3}}\left\vert \psi _{j(t)k_{3}}\left( z\right)
\right\vert dz \\
&\leq &\kappa ^{2}B^{2j(t)}\sum_{k_{1}}^{d_{t}}\int_{\mathbb{S}%
^{2}}\left\vert \psi _{j(t)k_{1}}\left( z\right) \right\vert dz=\kappa
^{2}B^{2j(t)}d_{j(t)}\left\Vert \psi _{j(t)k}\right\Vert _{L^{1}(\mathbb{S}%
^{2})} \\
&\leq &d_{t}\kappa ^{2}B^{2j(t)}B^{-j(t)}=d_{t}\kappa ^{2}B^{j(t)}\text{ ,}
\end{eqnarray*}%
where we have used again $\left\Vert \psi _{j(t)k}\right\Vert _{L^{p}(%
\mathbb{S}^{2})}^{p}=O(B^{2j(t)(\frac{1}{2}-\frac{1}{p}%
)p})=O(B^{j(t)(p-2)}), $ for $p=1$. Thus (\ref{mainbound}) is established.
\end{proof}

For definiteness, we shall also impose tighter conditions on the rate of
growth of $d_{t},B^{j(t)}$ with respect to $R_{t},$ so that we can obtain a
much more explicit bound, as follows:

\begin{corollary}
\label{c:berlusconiritorna} Let the previous assumptions and notation
prevail, and assume moreover that there exists $\alpha ,\beta $ such that,
as $t\rightarrow \infty $%
\begin{equation*}
B^{2j(t)}\approx R_{t}^{\alpha }\text{ , }0<\alpha <1\text{ , }d_{t}\approx
R_{t}^{\beta }\text{ , }0<\beta <\alpha \text{ .}
\end{equation*}%
There exists a constant $\kappa $ (depending on $\zeta _{1},\zeta _{2}$, but
not on $j,d_{j},B$) such that%
\begin{equation}
d_{2}\left( \widetilde{\beta }_{j(t).}^{\left( R_{t}\right) },Z\right) \leq
\kappa \frac{d_{t}B^{j(t)}}{\sqrt{R_{t}}}\text{ },  \label{corollary}
\end{equation}%
for all vectors $\big(\widetilde{\beta }_{jk_{1}}^{\left( R_{t}\right) },...,%
\widetilde{\beta }_{jk_{d_{t}}}^{\left( R_{t}\right) }\big),$ \ such that (%
\ref{diego}) holds.
\end{corollary}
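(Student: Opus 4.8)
The plan is to specialize the general estimate (\ref{mainbound}) of Theorem \ref{t:growing} to the stated polynomial regime and to verify that, for a suitable (large but fixed) choice of $\tau$, the first summand on the right-hand side of (\ref{mainbound})---arising from the off-diagonal covariances---is dominated by the second summand, which is already exactly of the target order $d_t B^{j(t)}/\sqrt{R_t}$. Since here $\Vert C^{-1}\Vert_{op}=\Vert C\Vert_{op}=1$ and $\sigma_{jk}^{3}\geq \zeta_1^{3/2}q_2^3$, the second term of (\ref{mainbound}) is bounded by $\kappa_0\, d_t B^{j(t)}/\sqrt{R_t}$ for a constant $\kappa_0$ depending only on $\zeta_1,\zeta_2$ (and on $\tau$ through $c'$). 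It therefore suffices to absorb the covariance term into this same order.

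For the covariance term, I would first insert the net condition (\ref{diego}), which provides constants $c_1,c_2>0$ with $c_1/\sqrt{d_t}\leq \inf_{k_1\neq k_2} d(\xi_{j(t)k_1},\xi_{j(t)k_2})\leq c_2/\sqrt{d_t}$. Since $1+x\geq x$, this gives
\begin{equation*}
1+B^{j(t)}\inf_{k_1\neq k_2} d(\xi_{j(t)k_1},\xi_{j(t)k_2})\;\geq\; c_1\,\frac{B^{j(t)}}{\sqrt{d_t}},
\end{equation*}
and hence
\begin{equation*}
\frac{c\,d_t}{\bigl(1+B^{j(t)}\inf_{k_1\neq k_2} d(\xi_{j(t)k_1},\xi_{j(t)k_2})\bigr)^{\tau}}\;\leq\; \frac{c}{c_1^{\tau}}\,\frac{d_t^{\,1+\tau/2}}{B^{j(t)\tau}}.
\end{equation*}
The reason for working in the regime $\beta<\alpha$ is that $B^{j(t)}/\sqrt{d_t}\approx R_t^{(\alpha-\beta)/2}\to\infty$, so this replacement loses nothing asymptotically.

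It then remains to compare this last expression with the target $d_t B^{j(t)}/\sqrt{R_t}$, which reduces to checking that $d_t^{\tau/2}\sqrt{R_t}/B^{j(t)(\tau+1)}$ stays bounded. Substituting the polynomial rates $B^{2j(t)}\approx R_t^{\alpha}$ and $d_t\approx R_t^{\beta}$, this quantity is of order $R_t^{E(\tau)}$ with exponent
\begin{equation*}
E(\tau)=\tfrac12\bigl[\tau(\beta-\alpha)+(1-\alpha)\bigr],
\end{equation*}
which is nonpositive precisely when $\tau\geq (1-\alpha)/(\alpha-\beta)$. Because in (\ref{mainbound}) the index $\tau$ ranges over all integers $\geq 2$, I would simply fix any integer $\tau$ exceeding this threshold; for such $\tau$ the covariance term is $O\bigl(d_t B^{j(t)}/\sqrt{R_t}\bigr)$, and summing the two contributions yields (\ref{corollary}) with a constant $\kappa$ depending only on $\zeta_1,\zeta_2$ (through the now-fixed $\tau$). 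The only genuinely delicate point is this exponent bookkeeping: one must exploit the freedom in $\tau$---available exactly because $\beta<\alpha$ forces the net spacing $B^{-j(t)}\sqrt{d_t}$ to be small relative to the effective scale $B^{-j(t)}$---in order to push the slowly decaying covariance term below the third-moment term. Everything else is a direct substitution into the already-established inequality (\ref{mainbound}).
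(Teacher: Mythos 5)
Your argument is correct and follows essentially the same route as the paper's own proof: both specialize the bound (\ref{mainbound}), use (\ref{diego}) to reduce the covariance term to $O(d_t^{1+\tau/2}B^{-\tau j(t)})$, and then verify via the polynomial rates that this is $O(d_tB^{j(t)}/\sqrt{R_t})$ once $\tau>(1-\alpha)/(\alpha-\beta)$. The exponent bookkeeping ($E(\tau)=\tfrac12[\tau(\beta-\alpha)+(1-\alpha)]$) is exactly the paper's computation of $R_t^{-\alpha+\tau(\beta-\alpha)+1}=o(1)$, written as a square root.
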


\begin{proof}
It suffices to note that%
\begin{eqnarray*}
\frac{d_{t}\kappa _{\tau ,\zeta _{2}}^{\prime }}{\left(
1+B^{j(t)}\inf_{k_{1}\neq k_{2}=1,...,d_{t}}d\left( \xi _{jk_{1}},\xi
_{jk_{2}}\right) \right) ^{\tau }} &=&O(B^{-\tau j(t)}d_{t}^{1+\tau /2}) \\
&=&O\left(\frac{d_{t}B^{j(t)}}{\sqrt{R_{t}}}\left(\frac{R_{t}d_{t}^{\tau }}{%
B^{(\tau +1)2j(t)}}\right)^{1/2}\right)
\end{eqnarray*}%
and 
\begin{equation*}
\frac{R_{t}d_{t}^{\tau }}{B^{(\tau +1)2j(t)}}=\frac{R_{t}^{1+\beta \tau }}{%
R_{t}^{(\tau +1)\alpha }}=R_{t}^{-\alpha +\tau (\beta -\alpha )+1}=o(1)\text{
, for }\tau >\frac{1-\alpha }{\alpha -\beta }\text{ .}
\end{equation*}
\end{proof}

\begin{remark}
\textrm{From (\ref{corollary}), it follows that for $R_{t}\simeq 10^{12}$ we
can establish asymptotic joint Gaussianity for all sequences of coefficients 
$\big(\widetilde{\beta }_{j(t)k_{1}(t)}^{\left( R_{t}\right) },...,%
\widetilde{\beta }_{j(t)k_{d}(t)}^{\left( R_{t}\right) })$ of dimensions
such that%
\begin{equation*}
\frac{d_{t}B^{j(t)}}{\sqrt{R_{t}}}=o(1)\text{ ,}
\end{equation*}%
e.g. we can take $d_{t}\simeq o(\sqrt{R_{t}}/B^{j(t)})\simeq
o(10^{6}/B^{j(t)})$, so that even at multipoles in the order of }$%
B^{j(t)}=O(10^{3})$\textrm{\ we might take around 10}$^{\mathrm{3}}$\textrm{%
\ coefficients with the multivariate Gaussian approximation still holding.
These arrays would not be sufficient for the map reconstruction at this
scale, but would indeed provide a basis for joint multiple testing
procedures as those described earlier.}
\end{remark}

\begin{remark}
\textrm{Assume that $d_{t}$ scales as $B^{2j(t)};$ loosely speaking, this
corresponds to the situation when one focusses on the whole set of
coefficients corresponding to scale $j,$ so that exact reconstruction for
bandlimited functions with $l=O(B^{j})$ is feasible. Under this requirement,
however, the "covariance" term $A(t),$ i.e. the first element on the
right-hand side of (\ref{mainbound}), is no longer asymptotically negligible
and the approximation with Gaussian independent variables cannot be expected
to hold. The approximation may however be implemented in terms of a Gaussian
vector with dependent components. For the second term, convergence to zero
when $d_{j(t)}\approx B^{2j(t)}$\ requires $B^{3j(t)}=o(\sqrt{R_{t}})$. In
terms of astrophysical applications, for $R_{t}\simeq 10^{12}$ this implies
that one can focus on scales until $180%
{{}^\circ}%
/B^{j}\simeq 180%
{{}^\circ}%
/10^{2}\simeq 2%
{{}^\circ}%
;$ this is close to the resolution level considered for ground-based Cosmic
Rays experiments such as ARGO-YBJ (\cite{iuppa}). Of course, this value is
much lower than the factor $B^{j}=o(\sqrt{R_{t}})=o(10^{6})$ required for
the Gaussian approximation to hold in the one-dimensional case (e.g., on a
univariate sequence of coefficients, for instance corresponding to a single
location on the sphere).}
\end{remark}

\begin{remark}
\textrm{As mentioned in the introduction, in this paper we decided to focus
on a specific framework (spherical Poisson fields), which we believe of
interest from the theoretical and the applied point of view. It is readily
verified, however, how our results continue to hold with trivial
modifications in a much greater span of circumstances, indeed in some cases
with simpler proofs. Assume for instance we observe a sample of $i.i.d.$
random variables $\left\{ X_{t}\right\} ,$ with probability density function 
$f(.)$ which is bounded and has support in $[a,b]\subset \mathbb{R}$.
Consider the kernel estimates 
\begin{equation}
\widehat{f}_{n}(x_{nk}):=\frac{1}{nB^{-j}}\sum_{t=1}^{n}K(\frac{X_{t}-x_{nk}%
}{B^{-j}})\text{ ,}  \label{stat}
\end{equation}%
where $K(.)$ denotes a compactly supported and bounded kernel satisfying
standard regularity conditions, and for each $j$ the evaluation points $%
(x_{n0},...,x_{nB^{j}})$ form a $B^{-j}$-net; for instance%
\begin{equation*}
a=x_{n0}<x_{n1}...<x_{nB^{j}}=b\text{ , }x_{nk}=a+k\frac{b-a}{B^{j}},\text{ }%
k=0,1,...,B^{j}\text{ .}
\end{equation*}%
As argued earlier, conditionally on $N_{t}([a,b])=n,$ (\ref{stat}) has the
same distribution as 
\begin{equation*}
\widehat{f}_{N_{t}}(x_{nk}):=\frac{1}{N_{t}[a,b]B^{-j}}\int_{a}^{b}K(\frac{%
u-x_{nk}}{B^{-j}})dN_{t}(u)\text{ ,}
\end{equation*}%
where $N_{t}$ is a Poisson measure governed by $R_{t}\times \int_{A}f(x)dx$
for all $A\subset \lbrack a,b].$ Considering that $\frac{N_{t}}{R_{t}}%
\rightarrow _{a.s.}1,$ a bound analogous to \ref{corollary} can be
established with little efforts for the vector $\widehat{f}%
_{n}(x_{n.}):=\left\{ \widehat{f}_{n}(x_{n1}),...,\widehat{f}%
_{n}(x_{nB^{j}})\right\} .$ We leave this and related developments for
further research.}
\end{remark}

\end{document}